\newtheorem{Lem}{Lemma\;}[section]
\newtheorem{The}{Theorem\;}[section]
\newtheorem{Pro}{Proposition\;}[section]
\renewenvironment{proof}{\noindent{\bf Proof.~}}{\hfill $\square$ \\ \indent}
\def\beq{\begin{equation}\displaystyle}
\def\eeq{\end{equation}}
\def\RR{\mathbb{R}}
\def\SS{\mathbb{S}}
\def\NN{\mathbb{N}}
\def\ds{\displaystyle}
\def\eps{\varepsilon}
\def\bar#1{{\overline #1}}
\def\pa{\partial}
\def\calM{{\cal M}}
\def\calD{{\cal D}}
\def\smes{{\cal S}_{\cal M}}
\begin{document}
\thispagestyle{empty}

\begin{center}
\large{\textbf{ On the hydrodynamical limit for a one dimensional }} \\ 
\large{\textbf{ kinetic model of cell aggregation by chemotaxis }}
\end{center}

\vspace {1.5cm}

\begin{center}
{\sc\large Fran\c{c}ois James}  \ {\small and}
 \ {\sc\large Nicolas Vauchelet}
\end{center}

\vspace{1,5cm}
\begin{center}
\begin{minipage}[t]{10cm}
\small{ \noindent \textbf{Abstract.}
The hydrodynamic limit of a one dimensional kinetic model describing 
chemotaxis is investigated. The limit system is a conservation law
coupled to an elliptic problem for which the macroscopic velocity 
is possibly discontinuous. Therefore, we need to work with measure-valued densities. After recalling a blow-up result
in finite time of regular solutions for the hydrodynamic model, 
we establish a convergence result of the solutions of the kinetic
model towards solutions of a problem limit defined thanks to the flux.
Numerical simulations illustrate this convergence result.

\medskip

%------------------------------------------- Keywords:
\noindent \textbf{Keywords.}
Chemotaxis, hydrodynamic limit, scalar conservation laws, aggregation.

\medskip

%------------------------------------------- ABSTRACT:
\noindent \textbf{Mathematics~Subject~Classification~(2000):}
92C17, 35L65.

}
\end{minipage}
\end{center}

\medskip

%% if necessary :
%%\tableofcontents

\bigskip

%----------------------------------------- SECTIONS AND SUBSECTION:
\section{Introduction}

\subsection{Modeling}

Chemotaxis is a process in which a population of cells rearrange its structures, reacting
to the presence of a chemical substance in the environment.
In the case of positive chemotaxis, cells migrate towards 
a concentration gradient of chemoattractant, allowing them
to aggregate.
Since several years, many attemps for describing chemotaxis from 
a Partial Differential Equations viewpoint have been considered. 
The population at the macroscopic level is described by a 
coupled system on its density and the chemoattractant concentration.
The most famous Patlak, Keller and Segel model \cite{KS,patlak}
is formed of parabolic or elliptic equations coupled through a
drift term.
Although this model has been successfully used to describe aggregation 
of cells, this macroscopic model has several shortcomings, 
for instance the detailed individual movement of cells is not 
taken into account.

In the 80's, experimental observations have shown that the 
motion of bacteria (e.g. {\it Escherichia Coli}) 
is due to the alternance of `runs and tumbles'
\cite{alt,erbanothmerbacterie,othdunalt,othste}.
Therefore kinetic approaches for chemotaxis have been proposed.
The so-called Othmer-Dunbar-Alt model \cite{othdunalt,othste,perthame}
describes the dynamic of the distribution function $f$ of cells 
at time $t$, position $x$ and velocity
$v$ and of the concentration of chemoattractant $S$~:
\beq\label{cinetique}
\left\{
\begin{array}{l}
\ds \pa_t f + v\cdot \nabla_x f = \int_{v'\in V} (T[S](v'\to v) f(v') 
- T[S](v\to v') f(v)) \,dv',    \\
\ds -\Delta S + S = \rho(t,x) := \int_{v\in V} f(t,x,v)\,dv.
\end{array}\right.
\eeq
In this equation, $V$ is the set of admissible velocities. The turning kernel $T[S](v' \to v)$ denotes the probability of cells to change
their velocity from $v'$ to $v$.
Several works have been devoted to the mathematical study of this kinetic
system, under various assumptions
on the turning kernel, see for instance \cite{chalubperth,bourncalv,erbanhwang,hwang}.
Here we shall assume that the velocities of cells have the same 
modulus $c$, so that $V= \SS_c:=\{ v \mid \|v\|=c\} $.
%In this context, theoretical results as well as numerical simulations are proposed in \cite{nv}.

Derivation of macroscopic models from \eqref{cinetique} has been 
investigated by several authors. When the chemotactic orientation, or taxis, that is the weight of the turning kernel, is small
compared to the unbiased movement of cells, the limit equations are of diffusion or drift-diffusion type.
In \cite{hillenothmer, othhill}, 
the authors show that the Patlak-Keller-Segel model can 
be obtained as a diffusive limit for a given smooth chemoattractant
concentration. A rigorous proof for the case of a nonlinear coupling to
an equation for the chemical can be found in \cite{chalubperth}, leading to a drift-diffusion equation. 

In this paper we focus on the opposite case, where taxis instead of undirected 
movement is dominating. The model has been proposed in \cite{dolschmeis}, and we briefly recall
how it is obtained. The limit problem is usually of hyperbolic type, see for instance \cite{filblaurpert}.
Dominant taxis is reflected in the transport model by
the fact that the dominating part of the turning kernel depends
on the gradient of the chemoattractant. At this stage, two possible models are
encountered.
On the one hand, we can assume that cells are able to compare 
the present chemical concentrations 
to previous ones and thus to respond to temporal gradients along their 
paths. The decision to change direction and turn or to continue moving 
depends then on the concentration profile of the chemical $S$ along 
the trajectories of cells. 
Thus the turning kernel takes the form (independant on $v$)
\beq\label{Tdt}
T[S](v'\to v) = \phi(\pa_t S + v'\cdot \nabla_x S).
\eeq
On the other hand, if cells are large enough, it can be assumed that they are 
able to sense the gradient of the chemoattractant instantly so that
we can use instead the expression 
\beq\label{T}
T[S](v'\to v) = \phi(v'\cdot \nabla_x S).
\eeq
Theoretical results as well as numerical simulations for models \eqref{cinetique}--\eqref{Tdt} and \eqref{cinetique}--\eqref{T} 
are proposed in \cite{nv}.

The function $\phi$ in the preceding formul\ae\ is the turning rate, obviously it has to be positive and monotone. 
More precisely, for attractive chemotaxis, the turning rate is smaller if cells swim in a favorable 
direction, that is $\pa_t S + v'\cdot \nabla_x S\ge 0$ (or $v\cdot \nabla_x S\ge 0$). Thus $\phi$ should be a nonincreasing function.
The converse holds true for repulsive chemotaxis. 
A simplified model for this phenomenon is the following choice for
$\phi$: we fix a positive parameter $\alpha$, and take
\beq\label{phi}
\phi\in C^\infty(\RR), \quad \phi' \leq 0, \quad
\phi(x)= 
\left\{\begin{array}{ll}
\phi_0 & \quad\mbox{ if } x < -\alpha, \\
\phi_0/4 & \quad\mbox{ if } x > \alpha, \\
\end{array}\right.
\eeq
where $\phi_0$ is a given constant.
Moreover, for the sake of clarity of the paper, we will assume the
following symmetry on $\phi$: there exists an odd function 
$\widetilde{\phi}$ such that
\beq\label{phisym}
\phi(x)=\phi_0\big(\frac{5}{8} + \widetilde{\phi}(x)\big), \quad \mbox{ and }\quad
\widetilde{\phi}(-x) = -\widetilde{\phi}(x).
\eeq

The turning kernel \eqref{Tdt}, compared to \eqref{T}, makes drastic changes in the behaviour of the solutions to the kinetic 
model (see \cite{nv}). Up to now we cannot take it into account in the theory, so that we focus in the following on the expression \eqref{T}.
As observed above, this can be considered as a biologically relevant model.

In the turning kernel, a specific parameter quantifies the ``memory'' of the bacteria.
When this parameter is small, a specific asymptotic regime leads to a macroscopic, hydrodynamic model.
In order to introduce this parameter, we rescale the system \eqref{cinetique} by setting
$$
\begin{array}{c}
\ds x = x_0 \bar{x}, \qquad t=t_0 \bar{t}, \qquad v=v_0\bar{v},  \\[2mm]
\ds S(t,x)=S_0\bar{S}(\bar{t},\bar{x}), \qquad 
f(t,x,v)=f_0 \bar{f}(\bar{t},\bar{x},\bar{v}), \qquad
\phi(z)=\phi_0 \bar{\phi}(z),
\end{array}
$$
where $\phi_0$ is the typical value for the size of the turning kernel, 
$v_0=c$ is the typical speed, $x_0=L_x$ is the characteristic length 
of the device and the typical time is defined by $t_0=x_0/v_0$.
Dropping the bars, the scaled version of \eqref{cinetique} reads
\beq\label{nodimension}
\begin{array}{c}
\ds \pa_t f + v\cdot \nabla_x f = \frac{1}{\eps}
\int_V \left( T[S](v'\to v) f(v')
- T[S](v\to v') f(v)\right)\,dv',  \\[2mm]
-\Delta S + S = \rho,
\end{array}
\eeq
where $\ds \eps=\frac{v_0}{\phi_0 x_0} \ll 1$ is the parameter we are interested in: it corresponds 
to the time interval of information sampling for the bacteria. The hydrodynamic limit corresponds to
$\epsilon\to 0$, and we first recall formally how it is obtained.

\subsection{Formal hydrodynamic limit}
%%%%%%%%%%%%%%%%%%%%%%%%%%%%%%%%%%%%%%%%%%%%%
We focus in this work on the one dimensional version of \eqref{nodimension},
so that the transport takes place in $x\in\RR$ and the set of velocity is $V = \{-c,c\}$. 
The expression of the turning kernel simplifies in such a way that 
\eqref{nodimension} with \eqref{T} rewrites
\beq\label{cin1D}
\pa_t f_\eps + v \pa_x f_\eps = \frac{1}{\eps} (\phi(- v\pa_x S_\eps) f_\eps(-v)
- \phi(v\pa_x S_\eps) f_\eps(v)), \qquad v\in V.
\eeq
\beq\label{ellip1D}
-\pa_{xx}S_\eps + S_\eps = \rho_\eps = f_\eps(v) + f_\eps(-v).
\eeq
We formally let $\eps$ go to $0$ assuming that $S$ and $f$ admit a Hilbert expansion
$$
f_\eps = f_0 + \eps f_1 + \cdots, \qquad S_\eps = S_0 + \eps S_1 + \cdots
$$
Multiplying \eqref{cin1D} by $\eps$ and taking $\eps=0$, we find 
\beq\label{equilib}
\phi(- c\pa_x S_0) f_0(-c) = \phi(c\pa_x S_0) f_0(c).
\eeq
Summing equations $\eqref{cin1D}$ for $c$ and $-c$, we obtain :
\beq\label{moment}
\pa_t (f_\eps(c) + f_\eps(-c)) + c \pa_x (f_\eps(c) - f_\eps(-c)) = 0.
\eeq
Moreover, from equation \eqref{equilib} we deduce that
$$
f_0(c)-f_0(-c) = \frac{\phi(- c\pa_x S_0)-\phi(c\pa_x S_0)}
{\phi(- c\pa_x S_0)+\phi(c\pa_x S_0)} (f_0(c) + f_0(-c))
$$
The density at equilibrium is defined by $\rho := f_0(c) + f_0(-c) = \int f_0(v)\,dv$. 
Taking $\eps = 0$ in \eqref{moment} we finally obtain
$$
\pa_t \rho + \pa_x (a(\pa_x S_0) \rho) = 0,
$$
where $a$ is defined by
$$
a(\pa_x S_0) = c\, \frac{\phi(- c\pa_x S_0)-\phi(c\pa_x S_0)}
{\phi(- c\pa_x S_0)+\phi(c\pa_x S_0)}= \frac{4}{5} c\,(\phi(- c\pa_x S_0)-\phi(c\pa_x S_0)),
$$
and we have used \eqref{phisym} for the last identity.
Notice that $a$ is actually a macroscopic quantity, since we can rewrite 
$$
a(\pa_xS_0) = -\frac 45 \int_V v\,\phi(v\pa_xS_0)\,dv,
$$
so that this expression is independant of the sign of $c$.

We couple this equation with the limit of the elliptic problem \eqref{ellip1D} for the 
chemoattractant concentration, so that, in summary, and dropping the index $0$, the formal hydrodynamic limit is the
following system
\begin{eqnarray}
&&\ds \pa_t \rho + \pa_x (a(\pa_x S) \rho) = 0, 
\label{eqrhohydro} \\[2mm]
&&\ds a(\pa_xS)= \frac 45 c\,(\phi(-c\pa_xS)-\phi(c\pa_xS)),
\label{eqahydro} \\[2mm]
&&\ds -\pa_{xx} S +S = \rho, 
\label{eqShydro}
\end{eqnarray}
complemented with the boundary conditions
\beq\label{bordhydro}
\rho(t=0,x)=\rho^{ini}(x), \qquad
\lim_{x\to \pm \infty} \rho(t,x) = 0, \qquad 
\lim_{x\to \pm \infty} S(t,x)=0. 
\eeq

The formal hydrodynamic limit from \eqref{cin1D}--\eqref{ellip1D} to \eqref{eqrhohydro}--\eqref{eqahydro}--\eqref{eqShydro} has been
 obtained in \cite{dolschmeis} and proved rigorously in the two-dimensional setting for a given smooth $S$. 
The aim of this paper is to give an account of the problems and open questions arising in the study
of the whole coupled system. 

\subsection{Preliminary remarks}
%%%%%%%%%%%%%%%%%%%%%%%%%%%%%%%%%%%%%%%%%%%%%

First notice that, even in this one dimensional framework, this study leads to
 difficulties mainly due to the lack of uniform estimates
for the solutions to the kinetic model when $\eps$ goes to zero and consequently to the very weak regularity of the solutions 
to the limit problem. 
Even though existence of weak solutions to the kinetic model is ensured in a $L^p$ setting, no uniform 
$L^\infty$ bounds can be expected. The reader is referred to \cite{nv} for some numerical evidences of this phenomenon, 
which is the mathematical translation of the concentration of bacteria. This is some kind of ``blow-up in infinite time'',
which for $\eps=0$ leads to actual blow-up in finite time, and creation of Dirac masses.
Moreover the balanced distribution vanishing the right 
hand side of \eqref{cin1D} depends on $S_\eps$; thus the techniques 
developed e.g. in \cite{chalubperth} cannot be applied.

We turn now to formal considerations about the limit system, noticing on the one hand that  a solution of \eqref{eqShydro} has
 the explicit expression
 \beq\label{Sexplicit}
S(t,x) = K*\rho(t,.)(x),\quad\mbox{where }K(x) = \frac{1}{2}e^{-|x|},
\eeq
so that the macroscopic conservation equation for $\rho$ \eqref{eqrhohydro} can be rewritten
	\begin{equation}\label{AgregLike}
\pa_t \rho + \pa_x(a(\pa_x K * \rho) \rho) = 0.
	\end{equation}
When $a$ is the identity function, this is exactly the so-called aggregation equation, 
which has been studied
by several authors, see \cite{bertozzi1,bertozzi2,bertozzi3,laurent} and references therein. In particular, 
finite time blow-up is evidenced when the kernel $K$ is not smooth enough.

On the other hand, taking $\alpha = 0$ in the definition of $\phi$ \eqref{phi} and 
assuming that the chemoattractant concentration is increasing for $x<x_0$
and decreasing for $x>x_0$ (which is usually true when cells aggregate 
at the position $x_0$), we deduce that $\ds a(\pa_x S)=-\frac 35 c\mbox{ sgn}(x-x_0)$ which presents a
singularity at $x=x_0$. The conservation equation \eqref{eqrhohydro} becomes therefore a linear conservation
equation with a discontinuous compressive velocity field, and it is well known that the solution is a Dirac mass.
If $\alpha$ is positive, it turns out that a Dirac mass appears as well, after a finite time.

In summary, we have to deal in the limit system with some kind of weakly nonlinear conservation equation on the density $\rho$.
Indeed on the one hand the expected velocity field depends on $\rho$, but in a nonlocal way. On the other hand, this equation behaves like
linear equations with discontinuous coefficients, in the sense that it admits measure-valued solutions. Therefore a major difficulty
in this study will be to define properly the velocity field $a=a(\partial_xS)$ and the product $a\rho$.

The paper is organized as follows. 
In Section \ref{Sec.Blowup} we consider the aggregation-like equation \eqref{AgregLike}, and recall existence and uniqueness results
as well as the existence of a finite time for which $L^\infty$-weak solutions of 
\eqref{eqrhohydro}--\eqref{eqShydro} blow up.
In Section \ref{SecConvKinet}, we investigate the hydrodynamical limit 
of system \eqref{cin1D}--\eqref{ellip1D} and prove in particular that it gives rise to a somehow natural definition
of the flux in the conservation equation.
Some numerical simulations illustrating this result are furnished 
in Section \ref{sec.numeric}.
Finally, we end this work with some conclusions and remarks.

%%%%%%%%%%%%%%%%%%%%%%%%%%%%%%%%%%%%%%%%%%%%%%%%%%%%%%%%%%%
\section{Aggregation-like equation} \label{Sec.Blowup}
%%%%%%%%%%%%%%%%%%%%%%%%%%%%%%%%%%%%%%%%%%%%%%%%%%%%%%%%%%%

In this section, we consider the equation
\beq\label{eq:rho}
\left\{\begin{array}{l}
\ds \pa_t\rho + \pa_x(a(\pa_xK*\rho) \rho) = 0, \\
\ds \rho(t=0,\cdot)=\rho^{ini}.
\end{array}\right.
\eeq
where $K$ is given by \eqref{Sexplicit}. We assume that
\beq\label{reg.condinit}
0<\rho^{ini}\in L^1\cap L^\infty(\RR).
\eeq
When $a(x)\equiv x$, this equation is the so-called aggregation equation
(see e.g. \cite{bertozzi1,bertozzi2,bertozzi3,bodvel,laurent}). 
It is known that for singular $\pa_xK$, solutions blow up in
finite time. More precisely, we show the blow-up in finite time of
$L^\infty$ weak solutions. Most of the results presented in this section 
are obtained thanks to a straightforward adaptation of techniques 
developed in \cite{bertozzi1, bertozzi2,bodvel,laurent}. Therefore
some proofs are not detailed.

\subsection{Existence and uniqueness of local $L^\infty$-weak solution}
%%%%%%%%%%%%%%%%%%%%%%%%%%%%%%%%%%%%%%%%%%%%%%%%%%%%%%%%%%%%%%%%%%%%%%
We prove in this section the local existence and uniqueness of a solution.
\begin{The}\label{exist.reg}
Let $\rho^{ini} \in L^1\cap L^\infty(\RR)$. Then there exists a $T>0$
such that there exists a unique weak solution $\rho$ to \eqref{eq:rho};
moreover $\rho\in C([0,T];L^1\cap L^\infty(\RR))$.
\end{The}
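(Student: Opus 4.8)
The plan is to prove local existence and uniqueness by a fixed-point (Banach contraction / Picard) argument on the velocity field, exploiting the smoothing properties of the convolution kernel $K(x) = \frac12 e^{-|x|}$. The crucial structural observation is that although $\partial_x K$ has a jump discontinuity at the origin, the map $\rho \mapsto a(\partial_x K * \rho)$ is quite regular: since $\partial_x K \in L^\infty$ and $\rho \in L^1 \cap L^\infty$, Young's inequality gives $\partial_x K * \rho \in L^\infty$ with $\|\partial_x K * \rho\|_\infty \le \tfrac12 \|\rho\|_1$, and because $\phi \in C^\infty$ with bounded derivatives (by \eqref{phi}), the composed velocity $a(\partial_x K * \rho)$ is a bounded, globally Lipschitz-in-$\rho$ vector field. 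This is what replaces the missing regularity of $K$ in the classical aggregation theory and lets the transport structure close.

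I would organize the argument in the following steps. First I would set up the problem as a transport equation $\partial_t \rho + \partial_x(u[\rho]\,\rho)=0$ with $u[\rho] := a(\partial_x K * \rho)$ and solve it by the method of characteristics: for a fixed velocity field $u$, the flow $X(t,\cdot)$ is well defined because $u$ is Lipschitz in $x$ (one differentiates $\partial_x K * \rho = (\partial_x K)*\rho$ and uses $\rho \in L^\infty$ together with the boundedness of $a'$), and $\rho(t)$ is the pushforward $X(t,\cdot)_\# \rho^{ini}$, which stays in $L^1 \cap L^\infty$ with controlled norms as long as the Jacobian of the flow stays bounded away from $0$. Second, I would define the solution map $\Gamma : \rho \mapsto \rho$ obtained by transporting $\rho^{ini}$ along the flow generated by $u[\rho]$, and estimate $\|\Gamma(\rho_1) - \Gamma(\rho_2)\|$ in a suitable norm (e.g. $L^1$, or a Wasserstein-type distance adapted to the $L^\infty$ setting) on the space $C([0,T]; L^1 \cap L^\infty)$ restricted to a ball. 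Third, the Lipschitz dependence of $u[\rho]$ on $\rho$ — namely $\|u[\rho_1] - u[\rho_2]\|_\infty \le C\|\rho_1 - \rho_2\|_1$ and a corresponding bound on $\partial_x u$ — feeds into a Gronwall estimate showing $\Gamma$ is a contraction provided $T$ is chosen small enough depending only on $\|\rho^{ini}\|_1$ and $\|\rho^{ini}\|_\infty$. Finally, the Banach fixed-point theorem yields a unique fixed point, and a standard bootstrap shows the continuity-in-time regularity $\rho \in C([0,T]; L^1 \cap L^\infty)$.

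The main obstacle, I expect, is controlling the $L^\infty$ norm of $\rho(t)$ along the flow, since this requires a lower bound on the Jacobian $\partial_x X(t,\cdot)$, which in turn depends on $\partial_x u[\rho] = a'(\partial_x K*\rho)\,(\partial_{xx} K * \rho)$. Here $\partial_{xx} K = K - \delta_0$ contains the Dirac mass coming from the jump in $\partial_x K$, so one cannot naively bound $\partial_{xx} K * \rho$ by $\|\rho\|_\infty$ times an $L^1$ norm — instead one must write $\partial_{xx} K * \rho = K*\rho - \rho$ and use $\rho \in L^\infty$ directly. This gives $\|\partial_x u[\rho]\|_\infty \le \|a'\|_\infty(\|K*\rho\|_\infty + \|\rho\|_\infty) \le C(1 + \|\rho\|_\infty)$, so the Jacobian satisfies $e^{-Ct} \le \partial_x X(t) \le e^{Ct}$ on the contraction time interval, keeping the pushforward density bounded. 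Once this estimate is in hand the rest is routine; the nonlinearity and nonlocality are both tamed by the boundedness of $\phi$ and its derivatives, so the whole construction reduces to the classical well-posedness theory for transport equations with Lipschitz velocity fields, exactly as in \cite{bertozzi1,bertozzi2,bodvel,laurent}.
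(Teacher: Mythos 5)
Your strategy is genuinely different from the paper's. The paper splits the two assertions: existence is obtained by compactness (mollify the initial data, prove uniform Lipschitz bounds in space and time on $a(\pa_xK*\rho_\eps)$ and on the characteristics $X_\eps$, apply Arzel\`a--Ascoli, and pass to the limit in the representation formula \eqref{rep.carac}), while uniqueness is proved separately by an energy estimate on the potential $S=K*(\rho_1-\rho_2)$: one tests the equation for $\rho_1-\rho_2$ against $S$ and closes a Gronwall inequality for $\|S\|_{H^1}^2$. That choice of uniqueness argument is deliberate, since it is exactly the method the authors later push to the measure-valued setting in Proposition~\ref{uniqS}, where any density-level contraction is unavailable. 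Your single fixed-point argument is closer to the route of \cite{bertozzi1}, and your identification of the key structural facts is correct and matches the paper's a priori estimates: $\|\pa_xK*\rho\|_{L^\infty}\le\frac12\|\rho\|_{L^1}$, and $\pa_x\bigl(a(\pa_xK*\rho)\bigr)=a'(\pa_xK*\rho)\,(K*\rho-\rho)$ is bounded by $2\|a'\|_{L^\infty}\|\rho\|_{L^\infty}$, which is precisely how Proposition~\ref{estim.rho} obtains the Riccati-type bound and the restriction $T<1/(2\|a'\|_{L^\infty}\|\rho^{ini}\|_{L^\infty})$.

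There is, however, one step of your plan that would fail as written: the contraction estimate for $\Gamma$ in $C([0,T];L^1\cap L^\infty)$. The pushforward (equivalently the representation formula) gives $\Gamma(\rho_i)(t,x)=\rho^{ini}(X_i(0;x,t))\,J_i(t,x)$ with $J_i$ the Jacobian, and the difference contains the term $\rho^{ini}(X_1(0;x,t))-\rho^{ini}(X_2(0;x,t))$. For $\rho^{ini}$ merely in $L^1\cap L^\infty$ this is not controlled, in $L^1$ or in $L^\infty$, by $\|X_1-X_2\|_{L^\infty}$: translation is continuous on $L^1$ but with no quantitative modulus, and is not even continuous on $L^\infty$. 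So the map $\rho\mapsto\Gamma(\rho)$ is not Lipschitz at the level of densities, and Banach's theorem cannot be invoked in that norm. The standard repair --- which your opening sentence already hints at --- is to run the fixed point on the velocity fields or flow maps instead: change variables back to $\rho^{ini}$ in the convolution, writing $\pa_xK*\rho_i(t,\cdot)(x)=\int\pa_xK\bigl(x-X_i(t;y,0)\bigr)\rho^{ini}(y)\,dy$, so that only the difference $\pa_xK(x-X_1)-\pa_xK(x-X_2)$ appears under the integral; the Lipschitz part of $\pa_xK$ then contributes $C\|X_1-X_2\|_{L^\infty}\|\rho^{ini}\|_{L^1}$, and the jump of $\pa_xK$ at the origin contributes only over the set of $y$ for which $X_1(y)$ and $X_2(y)$ straddle $x$, whose measure is $O(\|X_1-X_2\|_{L^\infty})$ by the bi-Lipschitz bound on the flow, weighted by $\|\rho^{ini}\|_{L^\infty}$. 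With that modification your argument closes and yields both existence and uniqueness at once; without it, the contraction step is a genuine gap.
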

The proof is an adaptation of results in \cite{bertozzi1, bertozzi2, laurent}.
We first recall the definition of the characteristics for this system:
$X(s;x,t)$ is a solution of the ODE
\beq\label{carac}
\frac{dX}{ds}(s;x,t)=a(\pa_xK*\rho)(s,X(s)), \qquad
X(t;x,t)=x.
\eeq
Then we have the following representation of the solution of \eqref{eq:rho}:
\beq\label{rep.carac}
\rho(t,x)=\rho^{ini}(X(0;x,t)) \exp\left({ -\int_0^t \pa_x a(\pa_xK*\rho)(s,X(s-t;x,t))\,ds}\right).
\eeq
The proof of this theorem relies strongly on the following estimates:
\begin{Pro}\label{estim.rho}
Let $\rho^{ini}$ such as in \eqref{reg.condinit} and let $\rho$ be 
a solution of \eqref{eq:rho} on $[0,T]$. Then there exists $T>0$ such 
that for all $t\in [0,T]$, there exists a nonnegative constant $C$ such that 
$$
\|\rho(t,\cdot)\|_{L^1(\RR)} +\|\rho(t,\cdot)\|_{L^\infty(\RR)}
\leq C,
$$
where $C$ only depends on $\|\rho^{ini}\|_{L^1(\RR)}$ and 
$\|\rho^{ini}\|_{L^\infty(\RR)}$.
\end{Pro}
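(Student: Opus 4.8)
The plan is to treat the statement as an a priori estimate: assuming $\rho$ is a solution on $[0,T]$, I derive the two bounds and then read off that the admissible time $T$ depends only on the initial norms. First I would settle the $L^1$ bound. Since $\rho^{ini}>0$ and the exponential factor in the representation formula \eqref{rep.carac} is strictly positive, $\rho(t,\cdot)$ stays nonnegative; integrating the conservation law in \eqref{eq:rho} in $x$ and using the decay at infinity then gives exact mass conservation,
$$\|\rho(t,\cdot)\|_{L^1(\RR)} = \int_\RR \rho(t,x)\,dx = \int_\RR \rho^{ini}(x)\,dx = \|\rho^{ini}\|_{L^1(\RR)},$$
so the $L^1$ norm is controlled for all $t$ by the initial data alone.

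The core of the argument is to control the $L^\infty$ norm through the exponent in \eqref{rep.carac}, which requires estimating $\pa_x v$ for the velocity $v:=a(\pa_x K*\rho)$. Writing $S:=K*\rho$ and recalling from \eqref{Sexplicit} that $K$ is the fundamental solution of $-\pa_{xx}+\mathrm{Id}$, so that $\pa_{xx}S=S-\rho$, the chain rule gives the key identity
$$\pa_x v = a'(\pa_x S)\,\pa_{xx}S = a'(\pa_x S)\,(S-\rho).$$
This is the crucial simplification: although $v$ is nonlocal in $\rho$, its spatial derivative is expressed through $S-\rho$, which is controlled by the $L^1$ and $L^\infty$ norms of $\rho$ itself. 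Indeed $a$ is $C^1$ with bounded derivative — from \eqref{phi} the function $\phi'$ is continuous and vanishes outside $[-\alpha,\alpha]$, hence $\|a'\|_{L^\infty}=:M<\infty$ — while Young's inequality gives $\|S\|_{L^\infty}\le\|K\|_{L^\infty}\|\rho\|_{L^1}=\tfrac12\|\rho^{ini}\|_{L^1}$. Consequently
$$\|\pa_x v(t,\cdot)\|_{L^\infty} \le M\Big(\tfrac12\|\rho^{ini}\|_{L^1}+\|\rho(t,\cdot)\|_{L^\infty}\Big).$$

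With this in hand I would close the estimate by a Gronwall-type argument. Taking the supremum in $x$ in \eqref{rep.carac} and setting $N(t):=\|\rho(t,\cdot)\|_{L^\infty}$, the previous bound yields
$$N(t)\le\|\rho^{ini}\|_{L^\infty}\,\exp\!\Big(\int_0^t M\big(\tfrac12\|\rho^{ini}\|_{L^1}+N(s)\big)\,ds\Big).$$
Working with the cumulative exponent $B(t):=\int_0^t\|\pa_x v(s,\cdot)\|_{L^\infty}\,ds$, which satisfies $B(0)=0$ and $B'(t)\le \tfrac12 M\|\rho^{ini}\|_{L^1}+M\|\rho^{ini}\|_{L^\infty}e^{B(t)}$, I would compare $B$ with the solution of the corresponding ODE: it is finite on a maximal interval $[0,T^*)$ with $T^*>0$ depending only on $M$, $\|\rho^{ini}\|_{L^1}$ and $\|\rho^{ini}\|_{L^\infty}$. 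For any $T<T^*$ this bounds $B$, hence $N$, uniformly on $[0,T]$, producing the claimed constant $C$.

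The main obstacle I expect is precisely this last, nonlinear step: the estimate for $\pa_x v$ feeds back the very quantity $\|\rho\|_{L^\infty}$ being estimated, so the resulting inequality is of Riccati type, with a quadratic right-hand side in $N$, and its solution can only be continued for a finite time. This is unavoidable and in fact consistent with the finite-time blow-up established later in this section; the role of the argument is not to obtain a global bound but to extract a positive time $T$ depending only on the initial norms, which is exactly what is needed to feed the fixed-point proof of Theorem \ref{exist.reg}. A secondary technical point is that $N(t)$ is a supremum and need not be differentiable, so I would run the comparison on the integral quantity $B(t)$ rather than differentiating $N$ directly.
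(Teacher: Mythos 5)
Your proof is correct and follows essentially the same route as the paper: both arguments integrate along the characteristics of \eqref{carac}, use $\pa_{xx}S=S-\rho$ together with the boundedness of $a'$ and the $L^1$ control of $S$ to estimate $\pa_x\big(a(\pa_xK*\rho)\big)$ by the unknown $L^\infty$ norm itself, and close with a nonlinear Gronwall/comparison argument that survives only up to a time $T$ determined by the initial norms. The only cosmetic difference is that the paper runs the Riccati inequality directly on $\|\rho(t,\cdot)\|_{L^\infty}$ (yielding the explicit bound $\|\rho^{ini}\|_{L^\infty}/(1-2\|a'\|_\infty\|\rho^{ini}\|_{L^\infty}t)$), whereas you run the comparison on the integrated exponent $B(t)$ from the representation formula \eqref{rep.carac}; both are valid.
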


\begin{proof}
The $L^1$ estimate is an easy consequence of the mass conservation. 
Then,
$$
|\pa_t\rho+a(\pa_xK*\rho)\pa_x\rho|=|-\pa_x(a(\pa_xK*\rho)) \rho|
\leq 2 \|a'\|_\infty |\rho|^2.
$$
Integrating along the characteristics curves, we get
$$
\|\rho(t,\cdot)\|_{L^\infty} \leq \|\rho^{ini}\|_{L^\infty} + 2\|a'\|_\infty
\int_0^t \|\rho(s,\cdot)\|_{L^\infty}^2\,ds.
$$
We deduce that as long as $2\|a'\|_\infty \|\rho^{ini}\|_{L^\infty} t<1$,
$$
\|\rho(t,\cdot)\|_{L^\infty} \leq \frac{\|\rho^{ini}\|_{L^\infty}}
{1-2\|a'\|_\infty \|\rho^{ini}\|_{L^\infty} t}.
$$
We notice that $T$ should satisfies the bound 
$T<1/(2\|a'\|_\infty \|\rho^{ini}\|_{L^\infty})$.
\end{proof}

\noindent{\bf Proof of existence.~}
We do not detail the proof of the existence of solution which can
be deduced thanks to an adaptation of \cite{bertozzi1, bertozzi2, laurent}, 
where the study of an aggregation equation is proposed.
We just recall the main argument of the proof in the following steps~:
\begin{enumerate}
\item We construct a family of approximating solutions $(\rho_\eps)$
by solving \eqref{eq:rho} with initial data $\rho^{ini}*g_\eps$ where
$g_\eps$ is a mollifier.
\item We state uniform Lipschitz estimates in space and time 
on the sequences $(a(\pa_xK*\rho_\eps))_\eps$ and $(X_\eps)_\eps$ and
use the Arzel\`a-Ascoli Theorem to extract converging subsequence.
\item We pass to the limit in the representation \eqref{rep.carac}.
{\hfill $\square$ \\ \indent}
\end{enumerate}

\noindent{\bf Proof of uniqueness.~}
The idea of this proof is to use the quantity $S$. Since this idea will 
be developed for measure-valued solutions, we detail this proof.
Computations are done for regular solutions, nevertheless they
can be made rigorous by introducing a regularization 
and passing to the limit (see \cite{bertozzi1}).
Let us consider two classical solutions $\rho_1$ and $\rho_2$.
Denoting $a_i=a(\pa_xK*\rho_i)$ for $i=1, 2$, we have
\beq\label{eq1-2}
\pa_t (\rho_1-\rho_2) + \pa_x(a_1(\rho_1-\rho_2))
+ \pa_x(\rho_2(a_1-a_2))=0.
\eeq
Define $S(t,x):=(\pa_xK*(\rho_1-\rho_2)(t,\cdot))(x)$ 
which solves the problem 
\beq\label{eqS1-2}
-\pa_{xx} S+S = \rho_1-\rho_2, \quad \mbox{ on } \RR.
\eeq
We notice that when $t=0$, we have $S(0,x)=0$. From the weak 
formulation of equation \eqref{eq1-2} with the test function 
$S$, we have 
\beq\label{formfaible}
\int_0^t\int_\RR \pa_t(\rho_1-\rho_2) S \,dxds = I + II
\eeq
where 
$$
\begin{array}{l}
\ds I = \int_0^t\int_\RR a_1(\rho_1-\rho_2) S \,dxds,
\\[3mm]
\ds II = \int_0^t \int_\RR \rho_2(a_1-a_2) S \,dxds.
\end{array}
$$
For the term $I$, we have using \eqref{eqS1-2} and integration by parts
$$
I=\int_0^t\int_\RR a_1(-\pa_{xx} S+S) \, S \,dxds = \frac{1}{2}
\int_0^t\int_\RR \pa_x a_1 \,|\pa_xS|^2 \,dxds+
\int_0^t\int_\RR a_1 S^2 \,dxds,
$$
Moreover, 
$$
\pa_x a_1 = -\frac 45 c (\phi'(-c\pa_xS_1)+\phi'(c\pa_xS_1))
\pa_{xx} S_1 \leq \max\{\frac 85 c \|\phi'\|_{L^\infty} S_1,0\},
$$
where we use the fact that $\phi$ is a nonincreasing positive 
function. From the $L^\infty$-bound on $S_1$, we deduce that there 
exists $\beta\in L^1([0,T])$ such that
$\pa_x a_1 \leq \beta$. Thus 
$$
I \leq \frac 12 \int_0^t \beta(s) \int_\RR |\pa_xS|^2\,dxds+
\int_0^t\int_\RR a_1 S^2 \,dxds.
$$
Then, the estimate $|a_1|\leq \frac 35 c$ gives
\beq\label{estimI}
I \leq \frac 12 \int_0^t \beta(s) \|\pa_x S\|^2_{L^2}\,ds+ \frac{3}{5} c 
\int_0^t \|S\|_{L^2}^2\,ds.
\eeq
For the term II of \eqref{formfaible}, we have thanks to the Cauchy-Schwarz 
inequality
$$
\begin{array}{ll}
\ds |II|= & \ds \left|\int_0^t \int_\RR 
\rho_2(a(\pa_xK*\rho_1)-a(\pa_xK*\rho_2)) S \,dxds\right| \\[2mm]
&\ds \leq \int_0^t \|\rho_2\|_{L^\infty} \|a'\|_\infty
\|\pa_xK*(\rho_1-\rho_2)\|_{L^2} \|S\|_{L^2}\,ds.
\end{array}
$$
Since $\pa_xS=\pa_xK*(\rho_1-\rho_2)$, we obtain
\beq\label{estimII}
|II|\leq \frac 12 \int_0^t \|\rho_2\|_{L^\infty} \|a'\|_\infty
(\|\pa_xS\|_{L^2}^2+\|S\|_{L^2}^2)\,ds.
\eeq
Then, we notice that using \eqref{eqS1-2} and thanks to an integration 
by parts the left hand side of \eqref{formfaible} can be rewritten
\beq\label{lhs}
\int_0^t\int_\RR \pa_t(\rho_1-\rho_2) S \,dxds =
\frac 12 \int_\RR (|\pa_xS|^2+S^2)\,dx
\eeq
Finally, we deduce from \eqref{formfaible}, \eqref{estimI}, \eqref{estimII}
and \eqref{lhs},
$$
\int_\RR (|\pa_xS|^2+S^2)\,dx \leq \int_0^t\|\rho_2\|_{L^\infty} \|a'\|_\infty
\left(\beta(s) \|\pa_xS\|_{L^2}^2+ \frac 35 c\|S\|_{L^2}^2\right)\,ds.
$$
Uniqueness follows from a Gronwall type argument.
{\hfill $\square$ \\ \indent}

\subsection{Blow-up in finite time}
%%%%%%%%%%%%%%%%%%%%%%%%%%%%%%%%%%%%%%%%

The blow-up of solutions of a one dimensional aggregation solution 
is proposed for instance in \cite{bodvel} where it is proved by the method
of characteristics that
aggregation of mass occurs. In \cite{bertozzi1, bertozzi2}, the
finite time blow-up is obtained thanks to an energy estimate.
We assume that the initial data is given symmetric with respect
to $0$ and positive. It is easy to show then that for all $t>0$,
$\rho(t,x)=\rho(t,-x)$. 
Moreover, for the sake of simplicity, we assume that there 
exists $\delta>0$ such that supp$(\rho^{ini}) \subset [-\delta,\delta]$.
Then for all $x>\delta$ the function $S^{ini}=K*\rho^{ini}$ satisfies
$\pa_xS^{ini}(x)<0$, so that the characteristics defined by \eqref{carac}
are inward. Thus for all $t>0$, supp$(\rho(t,\cdot)) \subset [-\delta,\delta]$.

The energy of the system is defined as
\beq\label{nrj}
E(t)=\frac 12 \int_\RR (|\pa_xS|^2+|S|^2) \,dx = \frac12 \int_\RR \rho S\,dx,
\eeq
where the last formulation is obtained by integration by parts.
On the one hand, we have the obvious bound
\beq\label{nrj.bound}
E(t)\leq \frac 12 \|\rho\|_{L^1} \|K*\rho\|_{L^\infty}
\leq \frac 12 \|\rho\|_{L^1}^2.
\eeq
On the other hand, using \eqref{eq:rho} we have
$$
\frac{d}{dt}E(t) = \int_\RR a(\pa_xS)\pa_xS \rho \,dx.
$$
Moreover $\|\pa_xS\|_{L^\infty}\leq \frac 12 \|\rho^{ini}\|_{L^1}$.
Since the function $a$ is assumed to be regular,
there exists $\zeta>0$ such that $a(x)x\geq \zeta |x|^2$ for all
$x\in [-\frac 12 \|\rho^{ini}\|_{L^1},\frac 12 \|\rho^{ini}\|_{L^1}]$.
Thus,
\beq\label{estim.dtnrj}
\frac {d}{dt}E(t) \geq \zeta \int_\RR |\pa_xS|^2 \rho\,dx.
\eeq
We now make use of the following result whose proof is given in 
\cite{bertozzi2}:
\begin{Pro}
There exists a constant $C>0$ such that 
for all $\delta$ sufficiently small, we have for any symmetric 
nonnegative function $\rho$ in $L^1(\RR)$ with a compact support in
$[-\delta,\delta]$,
$$
\int_\RR |\pa_xK*\rho|^2 \rho \,dx \geq C.
$$
\end{Pro}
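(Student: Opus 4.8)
The plan is to reduce the exponential kernel to a sign kernel and to evaluate the resulting integral exactly through the cumulative distribution of $\rho$. Writing $\pa_xK(z)=-\frac12\,\mathrm{sgn}(z)e^{-|z|}$, I would introduce the ``frozen'' approximation $g(x):=-\frac12\int_\RR \mathrm{sgn}(x-y)\rho(y)\,dy$. Setting $M=\|\rho\|_{L^1}$ and letting $F(x)=\int_{-\infty}^x\rho(y)\,dy$ denote the cumulative distribution of $\rho$, one has $g(x)=\frac M2-F(x)$, since $\rho\in L^1(\RR)$ has no atoms and $F$ is continuous. Note that although the statement is phrased with a universal constant $C$, the natural lower bound is of the form $C\,M^3$; in the blow-up argument the mass $M=\|\rho^{ini}\|_{L^1}$ is conserved, so the relevant $C$ is indeed fixed.

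The core computation is the exact evaluation of two integrals via the probability integral transform: the pushforward of the measure $\rho\,dx$ under $F$ is precisely the Lebesgue measure on $[0,M]$, so that for any integrable $h$ one has $\int_\RR h(F(x))\rho(x)\,dx=\int_0^M h(u)\,du$. Applying this with $h(u)=(u-M/2)^2$ and then $h(u)=|u-M/2|$ gives
$$\int_\RR g^2\rho\,dx=\int_0^M\Big(u-\frac M2\Big)^2\,du=\frac{M^3}{12},\qquad \int_\RR |g|\,\rho\,dx=\int_0^M\Big|u-\frac M2\Big|\,du=\frac{M^2}{4}.$$
These identities are where the smallness of $\delta$ is not yet used; they hold for any nonnegative $\rho\in L^1(\RR)$, irrespective of symmetry.

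Next I would control the error made in replacing $\pa_xK*\rho$ by $g$. For $x$ in the support $[-\delta,\delta]$ the integration variable $y$ also lies in $[-\delta,\delta]$, whence $|x-y|\le 2\delta$ and $|e^{-|x-y|}-1|\le|x-y|\le 2\delta$. This yields the pointwise bound $|\pa_xK*\rho(x)-g(x)|\le \delta M$ on the support. Combining with the elementary inequality $(g+r)^2\ge g^2-2|g|\,|r|$ and the two identities above gives
$$\int_\RR |\pa_xK*\rho|^2\rho\,dx\ \ge\ \int_\RR g^2\rho\,dx-2\,\delta M\int_\RR|g|\,\rho\,dx\ =\ \frac{M^3}{12}-\frac{\delta M^3}{2},$$
which is bounded below by $M^3/24$ as soon as $\delta\le 1/12$. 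Taking $C=M^3/24$ concludes.

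The main obstacle, and the only place requiring genuine care, is the error estimate together with the bookkeeping of the quantifiers: one must check that the replacement error is uniform over all admissible $\rho$ (it is, since the bound $\delta M$ depends on $\rho$ only through its conserved mass), and that the change-of-variables identity is justified for a general $L^1$ density, i.e. that $F$ is continuous and its pushforward is Lebesgue on $[0,M]$. The symmetry hypothesis on $\rho$ is not actually needed for this lower bound; it only serves the surrounding blow-up argument.
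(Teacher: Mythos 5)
Your proof is correct, and it is worth noting that the paper itself does not prove this proposition: it simply cites \cite{bertozzi2}, where the corresponding lemma is established for radially symmetric densities by exploiting the symmetry to express $\pa_xK*\rho$ near the origin in terms of the mass contained in $[-|x|,|x|]$. Your route is more elementary and slightly more general. The reduction of $\pa_xK$ to the sign kernel, the identity $g=\tfrac M2-F$, and the probability integral transform giving the exact values $\int g^2\rho\,dx=M^3/12$ and $\int|g|\rho\,dx=M^2/4$ are all valid (the pushforward of $\rho\,dx$ under $F$ is Lebesgue on $[0,M]$ because $F$ is continuous, $\rho$ being in $L^1$), and the error bound $|\pa_xK*\rho-g|\le\delta M$ on the support, combined with $(g+r)^2\ge g^2-2|g||r|$, cleanly yields the lower bound $M^3/12-\delta M^3/2\ge M^3/24$ for $\delta\le 1/12$. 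You are also right on the two points of bookkeeping: the symmetry hypothesis is not needed for this inequality (it only matters for the surrounding blow-up argument, where it keeps the support in $[-\delta,\delta]$), and the constant $C$ cannot be universal as literally stated but must scale like $M^3$; since the mass is conserved along the flow, this is exactly what the application requires, so your reading is the correct one.
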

Then, from \eqref{estim.dtnrj} there exists a constant $C>0$ such that
for all $t>0$,
$$
E(t)-E(0)\geq Ct.
$$
Therefore, with \eqref{nrj.bound} we have proved
\begin{The}
Let $\rho$ be a symmetric solution of \eqref{eq:rho} with  symmetric, positive initial 
data with compact support included in $[-\delta,\delta]$.
For sufficiently small $\delta$, there exists a time $T^*>0$ 
for which the solution $\rho$ ceases to exist, i.e.
$$
\lim_{t\to T^*} \|\rho(t,.)\|_{L^p(\RR)} = +\infty, \quad 
\mbox{ for }p\in (1,\infty).
$$
\end{The}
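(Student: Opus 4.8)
The plan is to obtain the blow-up from the incompatibility of the two bounds on the energy already derived above. On the one hand, inequality \eqref{estim.dtnrj} together with the preceding Proposition shows that, at every time the solution exists while remaining symmetric, nonnegative, and supported in $[-\delta,\delta]$, one has $\frac{d}{dt}E(t)\ge \zeta\int_\RR|\pa_xK*\rho|^2\rho\,dx\ge C>0$; integrating gives the linear lower bound $E(t)\ge E(0)+Ct$. On the other hand, \eqref{nrj.bound} provides the uniform upper bound $E(t)\le \tfrac12\|\rho(t,\cdot)\|_{L^1}^2$, and since mass is conserved this equals $\tfrac12\|\rho^{ini}\|_{L^1}^2$, a fixed constant. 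Sandwiching the two yields $E(0)+Ct\le \tfrac12\|\rho^{ini}\|_{L^1}^2$, which is impossible once $t>T^*:=\big(\tfrac12\|\rho^{ini}\|_{L^1}^2-E(0)\big)/C$. Hence no solution in the class $C([0,T];L^1\cap L^\infty(\RR))$ retaining these symmetry and support properties can exist beyond $T^*$, so the maximal existence time is finite.

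Next I would identify the norm in which the breakdown occurs. Symmetry and confinement of the support to $[-\delta,\delta]$ are propagated in time (as recalled just before the energy computation), so the only way the solution can cease to exist is through loss of $L^1\cap L^\infty$ regularity. Here I invoke Theorem \ref{exist.reg} and, crucially, Proposition \ref{estim.rho} as a continuation criterion: restarting the local construction from any time $t_0$ produces a solution on an additional interval of length at least $1/\big(2\|a'\|_\infty\,\|\rho(t_0,\cdot)\|_{L^\infty}\big)$. Were $\|\rho(t,\cdot)\|_{L^\infty}$ to stay bounded up to $T^*$, the solution would therefore extend past $T^*$, contradicting the finiteness of the maximal time. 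Thus $\|\rho(t,\cdot)\|_{L^\infty}\to+\infty$ as $t\to T^*$, with the quantitative rate $\|\rho(t,\cdot)\|_{L^\infty}\ge 1/\big(2\|a'\|_\infty(T^*-t)\big)$.

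The main obstacle is upgrading this to $\|\rho(t,\cdot)\|_{L^p}\to+\infty$ for every $p\in(1,\infty)$: blow-up of the $L^\infty$ norm under conserved mass does not by itself force the $L^p$ norms to diverge, since a spike carrying vanishing mass would escape the conclusion for small $p$. One must show that a non-vanishing fraction of the mass genuinely concentrates on a shrinking set. For this I would track the second moment $m_2(t)=\int_\RR x^2\rho\,dx$; differentiating and using \eqref{eq:rho} gives $\frac{d}{dt}m_2(t)=2\int_\RR x\,a(\pa_xS)\rho\,dx$, which by symmetry and the attractive (compressive) sign of $a$ exploited in \eqref{estim.dtnrj} is nonpositive, and which one aims to bound above by $-c_\delta\,m_2(t)$ so that $m_2(t)\to0$ as $t\to T^*$. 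Since the total mass $M=\|\rho^{ini}\|_{L^1}$ is conserved and supported in $[-\delta,\delta]$, the vanishing of $m_2$ means the mass collapses to the origin; quantitatively, once a fixed fraction of $M$ lies in an interval of length $\ell(t)\to0$, Hölder's inequality on that interval gives $\|\rho(t,\cdot)\|_{L^p}\ge c\,\ell(t)^{-(1-1/p)}\to+\infty$ for each $p>1$. The part requiring the most care is making the decay of $m_2$ quantitative, as it rests on a uniform lower bound for the strength of the velocity field $a(\pa_xS)$ away from the origin on the support.
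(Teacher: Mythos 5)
Your first paragraph reproduces the paper's proof exactly: the energy sandwich $E(0)+Ct\le E(t)\le\frac12\|\rho^{ini}\|_{L^1}^2$, with the linear growth coming from \eqref{estim.dtnrj} combined with the quoted Proposition and the upper bound from \eqref{nrj.bound} plus mass conservation, which forces a finite maximal existence time. The continuation criterion and the second-moment discussion you add afterwards go beyond what the paper itself establishes --- the paper stops at the energy contradiction and simply asserts the $L^p$ divergence, deferring the mechanism to \cite{bertozzi1,bertozzi2} --- and your sketch of the decay of $m_2$ is indeed incomplete (the inequality $\frac{d}{dt}m_2\le -c_\delta m_2$ is delicate because $a(\pa_xS)$ vanishes at the origin, so a uniform compression rate on the whole support is not immediate); but since the paper supplies no proof of that step either, your proposal agrees with the paper's argument on everything the paper actually proves, and is more candid than the paper about where the $L^p$ statement still needs work.
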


%%%%%%%%%%%%%%%%%%%%%%%%%%%%%%%%%%%%%%%%%%%%%%%%%%%%%%%%%%%%%%%%%%%%%%
\section{Convergence for the kinetic model}\label{SecConvKinet}
%%%%%%%%%%%%%%%%%%%%%%%%%%%%%%%%%%%%%%%%%%%%%%%%%%%%%%%%%%%%%%%%%%%%%%

In this section we investigate the convergence of a sequence of solutions to the  
microscopic model \eqref{cin1D}--\eqref{ellip1D}. We are not able yet to obtain rigorously  
\eqref{eqrhohydro}--\eqref{eqShydro}. We actually prove that the whole sequence of solutions is 
convergent, and that the macroscopic density satisfies a conservation equation with a uniquely determined flux.
More precisely, the main result of this section is the following theorem. We introduce the macroscopic densities
	$$
\rho_\eps = \int_V f_\eps\,dv,\qquad \rho = \int_V f\,dv.
	$$

\begin{The}\label{the.conv}
Let $T>0$ and let us assume that $\rho^{ini}$ is given in ${\cal M}_b(\RR)$.
Let $(f_\eps,S_\eps)$ be a solution to the kinetic--elliptic equation 
\eqref{cin1D}--\eqref{ellip1D} with initial data $f_\eps^{ini}$
such that $\rho_\eps^{ini}:=\int_V f_\eps^{ini}\,dv=\eta_\eps*\rho^{ini}$ 
where $\eta_\eps$ is a mollifier.
Then as $\eps\to 0$, the sequence converges to $(\rho, S)$ in the following sense\,:
$$
\begin{array}{c}
\ds \rho_\eps \rightharpoonup \rho
\qquad \mbox{ in } \quad \smes:=C([0,T];{\cal M}_{b}(\RR)-\sigma({\cal M}_{b},C_0)), \\[2mm]
\ds S_\eps \rightharpoonup S \qquad \mbox{ in } \quad
C([0,T];W^{1,\infty}(\RR))-weak,
\end{array}
$$
and $(\rho,S)$ is the unique solution in the distribution sense of
\beq\label{eq.flux}
\left\{
\begin{array}{l}
\ds \pa_t \rho + \pa_x J = 0, \\[2mm]
\ds -\pa_{xx} S + S = \rho,
\end{array}\right.
\eeq
complemented with initial data $\rho^{ini}$ and where 
$$
J = -\pa_x(A(\pa_x S)) + a(\pa_x S) S \quad \mbox{ a.e. }
$$
\end{The}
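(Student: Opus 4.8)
The plan is to run the standard hydrodynamic-limit machinery — uniform bounds, compactness, identification of the limit, uniqueness — but with the understanding that the entire difficulty lies in identifying the flux $J$, since the limit density is only a measure while the velocity $a(\pa_x S)$ becomes discontinuous exactly where $\rho$ concentrates. Writing $f_\eps^\pm := f_\eps(\pm c)$, $\rho_\eps = f_\eps^+ + f_\eps^-$ and the current $j_\eps := c(f_\eps^+ - f_\eps^-)$, I would first collect the estimates that survive as $\eps\to 0$. Nonnegativity of $f_\eps$ together with conservation of total mass shows that $\rho_\eps$ is bounded in $L^\infty(0,T;{\cal M}_b(\RR))$, and $|j_\eps|\le c\rho_\eps$ gives the same bound for $j_\eps$. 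Solving \eqref{ellip1D} as $S_\eps = K*\rho_\eps$ with $K$ from \eqref{Sexplicit} then yields the uniform bounds $\|S_\eps\|_{L^\infty},\|\pa_x S_\eps\|_{L^\infty}\le \tfrac12\|\rho^{ini}\|_{{\cal M}_b}$, while $\pa_t S_\eps = -\pa_x K*j_\eps$ is bounded in $L^\infty$, so $S_\eps$ is uniformly Lipschitz in time. The zeroth moment of \eqref{cin1D}, namely $\pa_t\rho_\eps + \pa_x j_\eps = 0$, makes $t\mapsto\langle\rho_\eps(t),\psi\rangle$ uniformly Lipschitz for every test function $\psi$. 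An Arzel\`a-Ascoli argument in the weak-$*$ topologies then produces, along a subsequence, the announced limits $\rho_\eps\rightharpoonup\rho$ in $\smes$ and $S_\eps\rightharpoonup S$ in $C([0,T];W^{1,\infty}(\RR))$-weak, and $\rho(0)=\rho^{ini}$ follows from $\rho_\eps^{ini}=\eta_\eps*\rho^{ini}\rightharpoonup\rho^{ini}$.

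The decisive compactness gain is that $\pa_x S_\eps$ converges \emph{strongly}. Indeed $\pa_x S_\eps(t,x)=\int_\RR\pa_x K(x-y)\,d\rho_\eps(t,y)$, and $y\mapsto\pa_x K(x-y)$ is bounded and continuous except at the single point $y=x$. At every $x$ which is not an atom of $\rho(t)$ the weak-$*$ convergence of $\rho_\eps(t)$ passes to the limit in this integral, so $\pa_x S_\eps(t,x)\to\pa_x S(t,x)$; since the atoms of $\rho(t)$ are at most countable, this holds for a.e. $(t,x)$. With the uniform $L^\infty$ bound and dominated convergence this gives $\pa_x S_\eps\to\pa_x S$ strongly in $L^p_{loc}((0,T)\times\RR)$ for all $p<\infty$. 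Denoting by $A$ the antiderivative of $a$ (i.e. $A'=a$, $A(0)=0$), continuity of $a$ and $A$ then yields $a(\pa_x S_\eps)\to a(\pa_x S)$ and $A(\pa_x S_\eps)\to A(\pa_x S)$ in $L^p_{loc}$, and, using the local uniform convergence of $S_\eps$, also $a(\pa_x S_\eps)S_\eps\to a(\pa_x S)S$.

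The heart of the proof is the flux identification. I would start from the first moment of \eqref{cin1D}, which reads $\pa_t j_\eps + c^2\pa_x\rho_\eps = \tfrac{2c}{\eps}m_\eps$ with $m_\eps := \Phi^-f_\eps^- - \Phi^+f_\eps^+$ and $\Phi^\pm := \phi(\pm c\pa_x S_\eps)$. Using $f_\eps^\pm=\tfrac{\rho_\eps}{2}\pm\tfrac{j_\eps}{2c}$ one has $m_\eps=\tfrac{\rho_\eps}{2}(\Phi^--\Phi^+)-\tfrac{j_\eps}{2c}(\Phi^-+\Phi^+)$; solving for $j_\eps$ and recalling from \eqref{eqahydro} that $a(\pa_x S_\eps)=c\,(\Phi^--\Phi^+)/(\Phi^-+\Phi^+)$ gives the exact constitutive identity $j_\eps = a(\pa_x S_\eps)\rho_\eps - \tfrac{\eps}{\Phi^-+\Phi^+}\big(\pa_t j_\eps + c^2\pa_x\rho_\eps\big)$. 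Here the symmetry \eqref{phisym} is essential: it forces $\Phi^-+\Phi^+=\phi(-c\pa_x S_\eps)+\phi(c\pa_x S_\eps)=\tfrac54\phi_0$ to be a genuine positive constant, so the remainder is simply $-\tfrac{4\eps}{5\phi_0}(\pa_t j_\eps + c^2\pa_x\rho_\eps)$; testing against $\varphi\in C_c^\infty$ and moving the derivatives onto $\varphi$, the uniform $L^1$ bounds on $\rho_\eps$ and $j_\eps$ make it $O(\eps)$, hence it vanishes in $\calD'$. It then remains to pass to the limit in the product $a(\pa_x S_\eps)\rho_\eps$, which is precisely the ill-defined object. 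The device is to eliminate $\rho_\eps$ through \eqref{ellip1D}: since $S_\eps$ is smooth, the chain rule gives $a(\pa_x S_\eps)\rho_\eps = a(\pa_x S_\eps)(-\pa_{xx}S_\eps + S_\eps) = -\pa_x\big(A(\pa_x S_\eps)\big)+a(\pa_x S_\eps)S_\eps$. Both terms on the right are stable under the strong convergence of $\pa_x S_\eps$, so $a(\pa_x S_\eps)\rho_\eps\to -\pa_x(A(\pa_x S))+a(\pa_x S)S=J$ in $\calD'$; passing to the limit in $\pa_t\rho_\eps+\pa_x j_\eps=0$ then yields $\pa_t\rho+\pa_x J=0$.

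Finally, uniqueness for \eqref{eq.flux} upgrades the subsequential convergence to convergence of the whole family. I would obtain it by adapting the energy argument used for the $L^\infty$-uniqueness above: given two solutions, the difference of the chemoattractants solves \eqref{eqS1-2}, and testing the difference of the conservation laws against this $S$ produces, after the same integrations by parts, a Gronwall inequality for $\int_\RR(|\pa_x S|^2+S^2)\,dx$ — the $A$-reformulation of the flux being exactly what keeps these manipulations meaningful when $\rho$ is a measure. The main obstacle, and the real content, is the flux identification of the third paragraph: the naive limit of $a(\pa_x S_\eps)\rho_\eps$ simply does not exist because $a(\pa_x S)$ jumps at the concentration points of $\rho$, and it is only after trading this product for $-\pa_x(A(\pa_x S_\eps))+a(\pa_x S_\eps)S_\eps$ — which requires both the strong compactness of $\pa_x S_\eps$ and, for the remainder, the constancy of $\Phi^-+\Phi^+$ granted by \eqref{phisym} — that a uniquely determined flux $J$ is selected in the limit.
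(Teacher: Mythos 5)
Your compactness and flux-identification arguments coincide with the paper's proof: the same zeroth and first moment equations, the same $O(\eps)$ remainder obtained by testing $\pa_tJ_\eps+c^2\pa_x\rho_\eps$ against $\pa_x\psi$, the same a.e.\ convergence of $\pa_xS_\eps$ (your ``atoms are countable'' argument is a cleaner route to the paper's Lemma on $\pa_xS_n$), and the identical key device $a(\pa_xS_\eps)\rho_\eps=-\pa_x(A(\pa_xS_\eps))+a(\pa_xS_\eps)S_\eps$ obtained by multiplying the elliptic equation by $a(\pa_xS_\eps)$. Up to and including the derivation of $\pa_t\rho+\pa_xJ=0$ for a subsequential limit, your proposal is correct and essentially the paper's proof.

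The gap is in the uniqueness step, which is what upgrades subsequential to full convergence. You propose to ``adapt the energy argument'' of Section 2.1, i.e.\ to test the difference of the conservation laws against $S=K*(\rho_1-\rho_2)$ and close a Gronwall inequality for $\int_\RR(|\pa_xS|^2+S^2)\,dx$. This does not survive the passage from $L^\infty$ densities to measures, and the paper says so explicitly before Proposition 3.1. Concretely, after integrating by parts you must estimate a term of the form $\int_\RR\bigl(A(\pa_xS_1)-A(\pa_xS_2)\bigr)\,\pa_{xx}S\,dx$ with $\pa_{xx}S=S-(\rho_1-\rho_2)$, i.e.\ you must pair a function that is merely bounded (by $\|a\|_\infty|\pa_x(S_1-S_2)|$) against the measure $\rho_1-\rho_2$; the resulting quantity $\int|\pa_xS|\,d|\rho_i|$ cannot be dominated by $\|S\|_{H^1}^2$ when $\rho_i$ concentrates where $|\pa_xS|$ is large --- in the $L^\infty$ setting this is exactly where the factor $\|\rho_2\|_{L^\infty}$ enters (term $II$ of the uniqueness proof of Theorem 2.1), and that factor is unavailable here. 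The paper's actual route is different: it closes the problem entirely at the level of $S$, writing $\pa_tS=\pa_xK*\bigl[\pa_x(A(\pa_xS))+a(\pa_xS)S\bigr]$ together with its $x$-derivative (using $-\pa_{xx}K+K=\delta_0$), in which only $S$, $\pa_xS\in L^\infty\cap BV$ and their convolutions with $K$, $\pa_xK$ appear --- no product with a measure. Multiplying the differentiated equation by $\mathrm{sign}(\pa_x(S_1-S_2))$ and integrating gives a Kruzhkov-type $W^{1,1}$ contraction, $\frac{d}{dt}\|S_1-S_2\|_{W^{1,1}}\le C\|S_1-S_2\|_{W^{1,1}}$, whence $S$ is unique and $\rho=-\pa_{xx}S+S$ is recovered uniquely. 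You need this (or an equivalent argument avoiding all products of measures with discontinuous functions) to complete the proof.
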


Before turning to the proof of this result we notice that the problem \eqref{eq.flux}
is equivalent to
\beq\label{eq:Slim}
\pa_tS -\pa_xK*\left[\pa_x(A(\pa_xS)) + a(\pa_xS)S\right] = 0, \quad 
\mbox{ in } \calD'(\RR).
\eeq
This is obtained by taking the convolution with $K$ of the first equation 
in \eqref{eq.flux}. 
This emphasizes the key role of $S$ in the study of the limit.

\subsection{Preliminary results}
%%%%%%%%%%%%%%%%%%%%%%%%%%%%%%%%%
First we recall the following statement on the kinetic-elliptic problem.
\begin{The}\label{th.cin1D}
Let $T>0$ and $\eps>0$. Assume $f_\eps^{ini}\in C(\RR)$.
Then problem \eqref{cin1D}--\eqref{ellip1D} complemented with initial 
data $f_\eps^{ini}$ admits a unique weak solution in 
$C([0,T]\times \RR\times V)\times C([0,T];C^2(\RR))$.
Moreover, we have the following estimates uniform in $\eps>0$\,:
\beq\label{momentf}
\int_\RR\int_V |v|^k f_\eps \,dxdv = |v|^k |\rho^{ini}|(\RR)\, ,\quad 
\mbox{ k }\in \NN. 
\eeq
\end{The}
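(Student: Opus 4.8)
The plan is to prove existence and uniqueness for fixed $\eps>0$ by a Banach fixed point argument that exploits the regularizing effect of the kernel $K$ from \eqref{Sexplicit}, and then to deduce the moment identities \eqref{momentf} directly from positivity and mass conservation. Since $V=\{-c,c\}$, I write $f^{\pm}:=f_\eps(\pm c)$, so that \eqref{cin1D} becomes the two-velocity system
$$
\begin{array}{l}
\ds \pa_t f^+ + c\,\pa_x f^+ = \frac1\eps\big(\phi(-c\,\pa_x S_\eps)\,f^- - \phi(c\,\pa_x S_\eps)\,f^+\big), \\[2mm]
\ds \pa_t f^- - c\,\pa_x f^- = \frac1\eps\big(\phi(c\,\pa_x S_\eps)\,f^+ - \phi(-c\,\pa_x S_\eps)\,f^-\big),
\end{array}
$$
coupled through $\rho_\eps=f^++f^-$ and $S_\eps=K*\rho_\eps$. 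The crucial structural point is that the transport speeds are the \emph{constants} $\pm c$: the characteristics are the straight lines $s\mapsto x\mp c(t-s)$ and are defined irrespective of any regularity of $S_\eps$, the chemotactic coupling entering only through the bounded nonnegative coefficients $\psi^\pm:=\phi(\pm c\,\pa_x S_\eps)$ in the zeroth-order source term.

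First I would set up the map $\mathcal{T}$ on $u:=\pa_xS_\eps\in C([0,T];L^\infty(\RR))$. Given $u$, the coefficients $\psi^\pm=\phi(\pm c\,u)$ are continuous and satisfy $\phi_0/4\le\psi^\pm\le\phi_0$ by \eqref{phi}; solving the resulting \emph{linear} system by Duhamel's formula along the characteristics, together with a standard Picard iteration, yields a unique mild solution $f^\pm\in C([0,T]\times\RR)$, nonnegative whenever $f_\eps^{ini}\ge0$ since the loss terms only produce exponential damping while the gain terms are nonnegative. Setting $\rho=f^++f^-$ and $\mathcal{T}(u):=\pa_xK*\rho$ closes the loop, and $\|\pa_xK*\rho\|_{L^\infty}\le\frac12\|\rho\|_{L^1}$ keeps $\mathcal{T}$ in the working space. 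Because $\phi\in C^\infty$ has bounded derivative, two inputs give $\|\psi_1^\pm-\psi_2^\pm\|_{L^\infty}\le c\|\phi'\|_{L^\infty}\|u_1-u_2\|_{L^\infty}$, and a Gronwall estimate on the difference of the Duhamel representations (using the mass bound below to control $\|f^\pm\|_{L^1}$) gives $\|\mathcal{T}(u_1)-\mathcal{T}(u_2)\|\le C(\eps,T)\|u_1-u_2\|$ with $C(\eps,T)\to0$ as $T\to0$. Hence $\mathcal{T}$ is a contraction on a short time interval; the fixed point theorem yields a unique local solution, which is extended to arbitrary $T$ by the $\eps$-independent bounds of the next step.

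For the moments, summing the two equations of the system makes the sources cancel and produces the local conservation law $\pa_t\rho_\eps + c\,\pa_x(f^+-f^-)=0$; integrating in $x$, the momentum flux $c(f^+-f^-)$ vanishing at infinity for integrable continuous data, gives $\frac{d}{dt}\int_\RR\rho_\eps\,dx=0$, i.e. mass conservation, uniformly in $\eps$. Since $\int_\RR\rho_\eps^{ini}\,dx=\int_\RR\eta_\eps*\rho^{ini}\,dx=|\rho^{ini}|(\RR)$, we obtain $\int_\RR\rho_\eps(t,\cdot)\,dx=|\rho^{ini}|(\RR)$ for all $t$. As every admissible velocity has modulus $|v|=c$, the $k$-th moment collapses to $\int_\RR\int_V|v|^k f_\eps\,dx\,dv=c^k\int_\RR\rho_\eps\,dx=|v|^k\,|\rho^{ini}|(\RR)$ for every $k\in\NN$, which is exactly \eqref{momentf} and is manifestly uniform in $\eps$. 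The $C([0,T];C^2(\RR))$ regularity of $S_\eps$ finally follows from $\pa_{xx}S_\eps=S_\eps-\rho_\eps$ with $\rho_\eps\in C([0,T]\times\RR)$.

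The main obstacle is the contraction step: the prefactor $1/\eps$ inflates the Gronwall constant like $e^{Ct/\eps}$, so the contraction horizon shrinks with $\eps$ and one must bootstrap carefully, using the $\eps$-independent mass bound and the maximum principle (which supplies an $L^\infty$ control of $f^\pm$) to iterate up to any prescribed $T$. A secondary delicacy is that for merely $L^1$ density, $\pa_xS_\eps=\pa_xK*\rho_\eps$ is continuous but not Lipschitz in $x$; this, however, never enters the characteristics, which are straight lines, so only the boundedness and time-continuity of the coefficients $\psi^\pm$ are needed to run the whole argument.
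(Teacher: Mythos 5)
Your argument is correct in outline, but it takes a genuinely different route from the paper: for the existence and uniqueness part the paper gives no proof at all, simply deferring to the reference \cite{nv}, and only records the one\-/line justification of \eqref{momentf} (mass conservation plus the fact that $|v|\equiv c$ on $V=\SS_c$), which is exactly the argument you give in your third paragraph. What you add is a self-contained construction: the reformulation as a two-velocity system with constant characteristic speeds $\pm c$, a fixed point on $u=\pa_xS_\eps$ exploiting that the coupling enters only through the bounded coefficients $\phi(\pm c\,u)\in[\phi_0/4,\phi_0]$, and the observation that the conserved $L^1$ norm makes the contraction horizon uniform along the evolution so that the local solution extends to any $T$ for fixed $\eps$. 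This buys a proof that is independent of \cite{nv} and makes transparent why the $1/\eps$ factor is harmless for fixed $\eps$ (it only inflates constants like $e^{Ct/\eps}$, never causing finite-time breakdown). Two small points you should tighten: the working space should be bounded \emph{continuous} functions of $(t,x)$ rather than $C([0,T];L^\infty)$, since you need $\psi^\pm=\phi(\pm c\,u)$ continuous to land back in $C([0,T]\times\RR\times V)$, and this is consistent because $\pa_xK*\rho$ is continuous for $\rho\in C_b\cap L^1$; and the identification $\int_\RR\rho_\eps\,dx=|\rho^{ini}|(\RR)$ implicitly requires $f_\eps^{ini}\ge0$ (so that $\rho_\eps\ge0$ and its integral equals its total variation), an assumption the theorem's statement omits but which you correctly flag via the positivity-preservation of the Duhamel representation.
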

\begin{proof}
The proof of the existence can be found in \cite{nv}. 
The estimates \eqref{momentf} rely on the conservation of the mass
and on the fact that since $v\in V=\SS_c$, $|v|$ is constant.
\end{proof}

Then, we furnish a convergence result for a sequence of functions $S$.
\begin{Lem}\label{convpaS}
Let $(\rho_n)_{n\in\NN}$ be a sequence of measures that converges weakly
towards $\rho$ in $\smes$. Let $S_n(t,x)=(K*\rho_n(t,\cdot))(x)$ and $S(t,x)=(K*\rho(t,\cdot))(x)$,
where $K$ is defined in \eqref{Sexplicit}. Then when $n\to\infty$ we have
	$$\begin{array}{rcl}
\pa_xS_n(t,x) &\longrightarrow & \pa_xS(t,x)\quad\mbox{ for a.e. } t\in [0,T],\ x\in \RR, \\
\pa_xS_n(t,x) &\rightharpoonup & \pa_xS(t,x)\quad\mbox{in }L^\infty w-* 
	\end{array}$$
\end{Lem}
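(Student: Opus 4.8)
The plan is to reduce both convergences to a single fact: pointwise convergence of $\pa_x S_n$ for a.e. $(t,x)$, together with a uniform $L^\infty$ bound. First I would record that $K\in W^{1,\infty}(\RR)$ with $\pa_x K(z)=-\tfrac12\,\mathrm{sgn}(z)\,e^{-|z|}$, so that $\pa_x S_n(t,\cdot)=(\pa_x K)*\rho_n(t,\cdot)$ and, since $|\pa_x K|\le\tfrac12$, one has $\|\pa_x S_n(t,\cdot)\|_{L^\infty}\le\tfrac12\,|\rho_n(t,\cdot)|(\RR)$. As the masses are uniformly bounded (by conservation, cf. \eqref{momentf}, or because a weakly convergent sequence is bounded), there is $M$ with $\sup_{n,t}\|\pa_x S_n(t,\cdot)\|_{L^\infty}\le M$; this bound is all that will be needed to pass from the pointwise statement to the weak-$*$ one.

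The heart of the matter is the pointwise convergence. Fix $t\in[0,T]$, so that $\rho_n(t,\cdot)\rightharpoonup\rho(t,\cdot)$ in ${\cal M}_b-\sigma({\cal M}_b,C_0)$, and fix $x\in\RR$. The difficulty, and the step I expect to be the main obstacle, is that $y\mapsto\pa_x K(x-y)$ is bounded and decays at infinity but is \emph{discontinuous} at $y=x$, where $\pa_x K$ has a unit jump; weak-$*$ convergence of measures only controls integrals against continuous test functions in $C_0$. To circumvent this I would, for $\delta>0$, replace $\pa_x K$ by a function $g_\delta\in C_0(\RR)$ that coincides with $\pa_x K$ outside $(-\delta,\delta)$ and interpolates continuously across the jump, so that $\pa_x K-g_\delta$ is supported in $[-\delta,\delta]$ and bounded by $1$. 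Decomposing $\pa_x S_n(t,x)-\pa_x S(t,x)$ as $\int g_\delta(x-\cdot)\,d(\rho_n-\rho)(t,\cdot)$ plus two remainders involving $\pa_x K-g_\delta$ integrated against $\rho_n(t,\cdot)$ and $\rho(t,\cdot)$, the first term tends to $0$ as $n\to\infty$ for each fixed $\delta$, because $g_\delta(x-\cdot)\in C_0(\RR)$.

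It then remains to make the two remainders small. Both are bounded by the mass placed on $[x-\delta,x+\delta]$: the $\rho$-remainder by $\rho(t,[x-\delta,x+\delta])$ and the $\rho_n$-remainder by $\rho_n(t,[x-\delta,x+\delta])$, where I use that the $\rho_n$ are nonnegative (they are cell densities). Since $\rho(t,\cdot)$ is a finite measure it has at most countably many atoms, hence $\rho(t,\{x\})=0$ for all but countably many $x$, i.e. for a.e. $x$; for such $x$, $\rho(t,[x-\delta,x+\delta])\to0$ as $\delta\to0$. For the $\rho_n$-remainder I would dominate $\rho_n(t,[x-\delta,x+\delta])$ by $\int\chi\,d\rho_n(t,\cdot)$ with a fixed $\chi\in C_c(\RR)$ satisfying $\mathbf{1}_{[x-\delta,x+\delta]}\le\chi\le\mathbf{1}_{[x-2\delta,x+2\delta]}$, and pass to the limit to obtain $\limsup_n\rho_n(t,[x-\delta,x+\delta])\le\rho(t,[x-2\delta,x+2\delta])$, which is again small. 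Letting first $n\to\infty$ and then $\delta\to0$ gives $\pa_x S_n(t,x)\to\pa_x S(t,x)$ for every $t\in[0,T]$ and a.e. $x\in\RR$, hence for a.e. $(t,x)$.

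Finally, the $L^\infty$ weak-$*$ convergence follows at once: by the uniform bound $|\pa_x S_n|\le M$ and the a.e. convergence just established, dominated convergence yields $\int\pa_x S_n\,\psi\to\int\pa_x S\,\psi$ for every $\psi\in L^1$, which is exactly the claimed convergence in $L^\infty$ weak-$*$. Thus the only genuinely delicate point is the one isolated in the second and third paragraphs---handling the jump of $\pa_x K$ at the origin by excising a shrinking neighborhood of the evaluation point and controlling the mass the $\rho_n$ deposit there, which is where the absence of an atom of $\rho$ at $x$ (valid for a.e. $x$) and the nonnegativity of the measures enter.
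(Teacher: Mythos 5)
Your proof is correct and follows essentially the same route as the paper: both arguments replace the kernel $\pa_xK(x-\cdot)$, which jumps at $y=x$, by a continuous approximation agreeing with it outside a small interval, pass to the limit in $n$ by weak-$*$ convergence against that continuous function, and then shrink the interval; the weak-$*$ statement in $L^\infty$ then follows from the uniform bound and dominated convergence in both treatments. The only difference is in the remainder estimate: the paper sandwiches the kernel between continuous upper and lower envelopes and bounds the discrepancy by $\tfrac12(1-e^{-\eps})|\rho(t,\cdot)|(\RR)$, whereas you bound it by the mass that $\rho_n$ and $\rho$ place near $x$ and invoke the a.e.\ absence of atoms --- a legitimate (and arguably more carefully justified) variant of the same step.
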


\begin{proof}
We have that
\beq\label{defpaxSn}
\pa_xS_n(t,x) = \big(\pa_xK*\rho_n(t,\cdot)\big)(x) = \int_\RR \frac 12\frac{x-y}{|x-y|} e^{-|x-y|} \,\rho_n(t,dy).
\eeq
Let $\eps>0$, we regularize the convolution kernel by introducing the following 
functions~: 
$$
\phi_{x,\eps}(y)= \left\{
\begin{array}{ll}
\ds \frac 12\frac{x-y}{|x-y|} e^{-|x-y|}, \qquad \mbox{ on } (-\infty,x-\eps]\cup[x,+\infty), \\
\ds \frac 12\left(\frac{1+e^{-\eps}}{\eps} (y-x) +1\right),
\qquad \mbox{ on } (x-\eps,x). \\
\end{array}\right.
$$
$$
\psi_{x,\eps}(y)= \left\{
\begin{array}{ll}
\ds \frac 12\frac{x-y}{|x-y|} e^{-|x-y|}, \qquad \mbox{ on } (-\infty,x]\cup[x+\eps,+\infty), \\[2mm]
\ds \frac 12\left(\frac{1+e^{-\eps}}{\eps} (y-x) -1\right),
\qquad \mbox{ on } (x,x+\eps). \\[2mm]
\end{array}\right.
$$
With this definition, we clearly have for all $x$, $y$ in $\RR$
\beq\label{estimpsiphi}
\psi_{x,\eps}(y) \leq \frac 12\frac{x-y}{|x-y|} e^{-|x-y|} \leq \phi_{x,\eps}(y).
\eeq
Moreover by definition of the weak convergence,
$$
\lim_{n\to +\infty} \int_\RR \phi_{x,\eps}(y)\,\rho_n(t,dy) = 
\int_\RR \phi_{x,\eps}(y)\,\rho(t,dy).
$$
Then, from \eqref{defpaxSn} and \eqref{estimpsiphi}, we deduce
$$
\limsup_{n\to +\infty} \pa_xS_n \leq \int_\RR \phi_{x,\eps}(y)\,\rho(t,dy).
$$
Moreover,
$$
\begin{array}{ll}
\ds \int_\RR \phi_{x,\eps}(y)\,\rho(t,dy) &\ds =\pa_xS +\frac 12\int_{x-\eps}^x
\left(1+ \frac{1+e^{-\eps}}{\eps}(y-x)-e^{y-x}\right)\,\rho(t,dy)  \\[3mm]
&\ds \leq \pa_xS +\frac 12(1-e^{-\eps})|\rho(t,\cdot)|(\RR)
\end{array}
$$
By the same token with $\psi_{x,\eps}$, we obtain the estimate
$$
\begin{array}{ll}
\ds \pa_xS-\frac 12(1-e^{-\eps})|\rho(t,\cdot)|(\RR) & \ds \leq
\liminf_{n\to +\infty} \pa_xS_n \\[2mm]
&\ds  \leq \limsup_{n\to +\infty} \pa_xS_n 
\leq \pa_xS+\frac 12(1-e^{-\eps})|\rho(t,\cdot)|(\RR).
\end{array}
$$
Letting $\eps\to 0$, we get $\lim_{n\to +\infty} \pa_xS_n(t,x) = \pa_x S(t,x)$
for almost all $t\in [0,T]$ and $x\in \RR$.
\end{proof}
We turn now to the uniqueness of $S$, which is the key point to get the uniqueness result in Theorem \ref{the.conv}.

\subsection{Uniqueness for $S$}\label{uniqueS}
%%%%%%%%%%%%%%%%%%%%%%%%%%%%%%%%%%%%%%%%%%%%%%%%%%%%%%%%%%%
In \cite{bertozzi1} (see proof of Theorem \ref{exist.reg} too), 
the authors obtain the uniqueness on the
aggregation equation by introducing a quantity which appears here 
naturally to be the potential $S$. They get an estimate that relies strongly
on the $L^\infty$ bound on the density whereas here it is only measure-valued
with a finite total variation. Therefore, we have to work in a weaker space, and
 we use the fact that the function $S$ defined by $S=K*\rho$ is a weak solution of 
\eqref{eq:Slim}. We have the following result.
\begin{Pro}\label{uniqS}
Let $S_1$ and $S_2$ be two weak solutions of 
\eqref{eq:Slim} in 
$L^\infty([0,T];B(\RR))\cap C([0,T];W^{1,1}(\RR))$
 with initial data $S_1^{ini}$ and 
$S_2^{ini}$ respectively.
Then there exists a nonnegative constant $C$ such that 
$$
\|S_1-S_2\|_{L^\infty([0,T];W^{1,1}(\RR))} \leq 
C \|S_1^{ini}-S_2^{ini}\|_{W^{1,1}(\RR)}.
$$
\end{Pro}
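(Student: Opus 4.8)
The plan is to prove the stronger, quantitative estimate directly by a Gronwall argument on the full norm, working only with the potentials. Set $\sigma:=S_1-S_2$ and $Y(t):=\|\sigma(t,\cdot)\|_{L^1(\RR)}+\|\pa_x\sigma(t,\cdot)\|_{L^1(\RR)}=\|\sigma(t,\cdot)\|_{W^{1,1}(\RR)}$. The starting point is to subtract the two copies of \eqref{eq:Slim} and to exploit the elliptic identity $\pa_{xx}K=K-\delta_0$ satisfied by the kernel $K$ of \eqref{Sexplicit}. This identity lets me trade the derivative in $\pa_xK*\pa_x(A(\pa_xS_i))$ onto the smooth kernel:
\beq
\pa_xK*\pa_x\big(A(\pa_xS_1)-A(\pa_xS_2)\big)=K*\big(A(\pa_xS_1)-A(\pa_xS_2)\big)-\big(A(\pa_xS_1)-A(\pa_xS_2)\big).
\eeq
The resulting evolution equation for $\sigma$ therefore splits into two convolution terms (against $K$ and against $\pa_xK$, both harmless) plus one purely local term $A(\pa_xS_1)-A(\pa_xS_2)$.

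For the $L^1$ bound on $\sigma$ itself I would multiply the equation for $\pa_t\sigma$ by $\mathrm{sgn}(\sigma)$ and integrate. Every term is then controlled by Young's inequality for convolutions, using $\|K\|_{L^1}=\|\pa_xK\|_{L^1}=1$, together with the Lipschitz bounds $|A(\pa_xS_1)-A(\pa_xS_2)|\le\|a\|_\infty|\pa_x\sigma|$ (since $A'=a$) and $|a(\pa_xS_1)S_1-a(\pa_xS_2)S_2|\le\|a\|_\infty|\sigma|+\|a'\|_\infty\|S_2\|_{L^\infty}|\pa_x\sigma|$, where the uniform bound on $\|S_2\|_{L^\infty}$ comes from $S_2\in L^\infty([0,T];B(\RR))$. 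At this level the local term $A(\pa_xS_1)-A(\pa_xS_2)$ is still an honest $L^1$ function, so it causes no trouble, and one obtains $\frac{d}{dt}\|\sigma\|_{L^1}\le C\,Y$. Note that $a$ and $a'$ are indeed bounded, since $\phi$ is smooth with bounded derivative.

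The $W^{1,1}$ part requires an equation for $\pa_x\sigma=\pa_xS_1-\pa_xS_2$; differentiating once more and using again $\pa_{xx}K=K-\delta_0$ produces the same convolution terms (bounded exactly as above) plus the \emph{local} term $-\pa_x(A(\pa_xS_1)-A(\pa_xS_2))$. This is the main obstacle: unlike the smoothed terms, it involves $\pa_{xx}S_i=S_i-\rho_i$, hence the measure $\rho_i$, and cannot be bounded naively in $L^1$. The key observation, of Kruzhkov type, is that after multiplying by $\mathrm{sgn}(\pa_x\sigma)$ this contribution is harmless: writing $A(\pa_xS_1)-A(\pa_xS_2)=b\,(\pa_xS_1-\pa_xS_2)$ with $b:=\int_0^1 a(\pa_xS_2+s(\pa_xS_1-\pa_xS_2))\,ds$ bounded, one sees that $A(\pa_xS_1)-A(\pa_xS_2)$ vanishes on the set $\{\pa_xS_1=\pa_xS_2\}$, which is precisely where a regularized sign function $\eta_\delta'(\pa_x\sigma)$ concentrates its derivative. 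Replacing $\mathrm{sgn}$ by such an $\eta_\delta'$, integrating by parts and letting $\delta\to0$ shows that this local term contributes nothing in the limit (its modulus is bounded by $\|a\|_\infty$ times the variation of $\pa_x\sigma$ on $\{|\pa_x\sigma|<\delta\}$, which tends to $0$). All computations are formal at this level but are made rigorous, as elsewhere in the paper, by regularizing the solutions and passing to the limit.

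Putting the two inequalities together gives $\frac{d}{dt}Y(t)\le C\,Y(t)$ on $[0,T]$, with $C$ depending only on $\|a\|_\infty$, $\|a'\|_\infty$ and $\sup_{[0,T]}\|S_2\|_{L^\infty}$. Gronwall's lemma then yields $Y(t)\le e^{Ct}Y(0)$, that is $\|S_1-S_2\|_{L^\infty([0,T];W^{1,1})}\le C\|S_1^{ini}-S_2^{ini}\|_{W^{1,1}}$, which is the claim. I expect the delicate point to be the rigorous justification of the Kruzhkov cancellation in the local term, since there $\pa_{xx}S_i$ is only a measure; this is exactly the place where the measure-valued framework forces us out of the $L^2$ energy estimate used for Theorem \ref{exist.reg} and into the weaker $W^{1,1}$ setting.
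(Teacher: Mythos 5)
Your proof follows essentially the same route as the paper: differentiate \eqref{eq:Slim}, use $-\pa_{xx}K+K=\delta_0$ to convert the awkward convolution into harmless $K$- and $\pa_xK$-convolutions plus a local term, derive the two coupled $L^1$ differential inequalities for $S_1-S_2$ and $\pa_x(S_1-S_2)$ by multiplying by (regularized) sign functions, and conclude by Gronwall. You are in fact more explicit than the paper about the one delicate point, namely the Kruzhkov-type cancellation of the local term $\pa_x\bigl(A(\pa_xS_1)-A(\pa_xS_2)\bigr)$ against $\mathrm{sgn}(\pa_x(S_1-S_2))$, which the paper passes over silently.
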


\begin{proof}
We introduce a function $A\in C^\infty(\RR)$ such that $A'=a$. 
Now, differentiating \eqref{eq:Slim}
and noticing that $K$ satisfies $-\pa_{xx}K+K=\delta_0$, we get
\beq\label{eq:dSlim}
\pa_t\pa_xS +\pa_x(A(\pa_xS))-\pa_xK*A(\pa_xS) + K*(a(\pa_xS)S)-a(\pa_xS)S = 0.
\eeq
The definition of $S$, $S(t,x)=(K*\rho(t,\cdot))(x)$, implies that
$\pa_xS$ belongs to $L^\infty(0,T;BV(\RR))$. Therefore equations 
\eqref{eq:Slim}--\eqref{eq:dSlim} have a sense in their weak formulation.
Let $S_1$ and $S_2$ satisfy the weak formulations of 
\eqref{eq:Slim}--\eqref{eq:dSlim} with initial data $S_1^{ini}$ and 
$S_2^{ini}$ respectively.
We denote by $a_1=a(\pa_x S_1)$ and $a_2=a(\pa_xS_2)$. We deduce from \eqref{eq:dSlim} that
$$
\begin{array}{c}
\ds \pa_t\pa_x(S_1-S_2) + \pa_x(A(\pa_xS_1)-A(\pa_xS_2)) = \\[4mm]
\ds \pa_xK*(A(\pa_xS_1)-A(\pa_xS_2)) + a_1S_1-a_2S_2 - K*(a_1S_1-a_2S_2).
\end{array}
$$
Multiplying this equation by ${\rm sign}(\pa_x(S_1-S_2))$,
integrating with respect to $x$ and using the properties
of the convolution product, we deduce
$$
\begin{array}{ll}
\ds \frac{d}{dt} \int_\RR |\pa_x(S_1-S_2)|\,dx \leq & \ds 
\|\pa_xK\|_{\infty} \int_\RR |A(\pa_xS_1)-A(\pa_xS_2)|\,dx + \\[4mm]
&\ds +(1+\|K\|_{\infty}) \int_\RR |a_1S_1 - a_2S_2|\,dx.
\end{array}
$$
The function $a$ being regular, we deduce
\beq\label{borndxS12}
\frac{d}{dt} \int_\RR |\pa_x(S_1-S_2)|\,dx \leq C_0
\int_\RR |\pa_x(S_1-S_2)|\,dx + C_1 \int_\RR |S_1-S_2|\,dx.
\eeq
By the same token with equation \eqref{eq:Slim}, it leads to 
\beq\label{bornS12}
\frac{d}{dt} \int_\RR |S_1-S_2|\,dx \leq C_2
\int_\RR |\pa_x(S_1-S_2)|\,dx + C_3 \int_\RR |S_1-S_2|\,dx.
\eeq
Summing \eqref{bornS12} and \eqref{borndxS12}, we deduce that there 
exists a nonnegative constant $C$ such that
$$
\frac{d}{dt} \|S_1-S_2\|_{W^{1,1}(\RR)} \leq C\|S_1-S_2\|_{W^{1,1}(\RR)}.
$$
Applying the Gronwall Lemma allows to conclude the proof.
\end{proof}

\subsection{Proof of Theorem \ref{the.conv}}
%%%%%%%%%%%%%%%%%%%%%%%%%%%%%%%%%%%%%%%%%%%%
Let $(f_\eps,S_\eps)$ be a solution of \eqref{cin1D}--\eqref{ellip1D}.
For fixed $\eps>0$, we have $f_\eps \in C([0,T]\times\RR\times V)$.
We define the flux $\ds J_\eps:=\int_V v f_\eps\,dv$ and the macroscopic velocity
$$
a(\pa_xS_\eps)=-\frac{\int_V v \phi(v\pa_xS_\eps)\,dv}
{\int_V \phi(v\pa_xS_\eps)\,dv}
= \frac 45 v (\phi(-v\pa_xS_\eps)-\phi(v\pa_xS_\eps)).
$$
We can rewrite the kinetic equation \eqref{cin1D} as
$$
\pa_t f_\eps + v\pa_xf_\eps= \frac{1}{\eps}(\phi(-v\pa_xS_\eps)\rho_\eps
-\frac 54 f_\eps). 
$$
Taking the zeroth and first order moments, we get
\begin{eqnarray}
&\ds \pa_t \rho_\eps + \pa_x J_\eps = 0,
\label{moment0} \\[2mm]
&\ds \pa_t J_\eps + v^2 \pa_x \rho_\eps = \frac{1}{\eps}\frac 54
 (a(\pa_xS_\eps)\rho_\eps -J_\eps).
\label{moment1}
\end{eqnarray}

 From \eqref{moment0}, we deduce that 
$$
\forall\, t\in [0,T], \quad |\rho_\eps(t,\cdot)|(\RR) = |\rho^{ini}|(\RR).
$$
Therefore, for all $t\in [0,T]$ the sequence $(\rho_\eps(t,\cdot))_\eps$
is relatively compact in $\calM_b(\RR)-\sigma(\calM_b(\RR),C_0(\RR))$.
Moreover, there exists $u_\eps\in L^\infty([0,T],BV(\RR))$ such that
$\rho_\eps=\pa_x u_\eps$. From \eqref{moment0}, we get that 
$\pa_tu_\eps=-J_\eps$ and with estimate \eqref{momentf} for $k=1$ we deduce that
$u_\eps$ is bounded in Lip$([0,T],L^1(\RR))$. It implies the equicontinuity 
in $t$ of $(\rho_\eps)_\eps$. Thus the sequence $(\rho_\eps)_\eps$ is 
relatively compact in $\smes$ and we can extract a subsequence still
denoted $(\rho_\eps)_\eps$ that converges towards $\rho$ in $\smes$.

We recall that $S_\eps(t,x)=(K*\rho_\eps(t,\cdot))(x)$ where $K(x)=\frac 12 e^{-|x|}$.
Denoting $S(t,x):=(K*\rho(t,\cdot))(x)$, since $\rho\in \smes$, we have
$S\in L^\infty([0,T];BV(\RR))$. From Lemma \ref{convpaS}, 
the sequence $(\pa_xS_\eps)_\eps$ converges in $L^\infty w-*$ and a.e. to $\pa_xS$ as $\eps$ goes to $0$. 

 From \eqref{moment0}--\eqref{moment1}, we have in the distribution sense
\beq\label{eqrhoeps1}
\pa_t\rho_\eps+\pa_x(a(\pa_xS_\eps)\rho_\eps) = 
\pa_x(a(\pa_xS_\eps)\rho_\eps -J_\eps)=
\frac 45 \eps \pa_x (\pa_tJ_\eps + v^2\pa_x\rho_\eps) = R_\eps.
\eeq
Now, for all $\psi\in C^2_c((0,T)\times\RR)$, we deduce from \eqref{momentf}
$$
\left|\int (\pa_tJ_\eps + v^2 \pa_x\rho_\eps)\pa_x\psi\,dxdt \right| 
\leq |v||\rho^{ini}|(\RR) \|\pa_t\pa_x\psi\|_{L^\infty} + 
|v|^2|\rho^{ini}|(\RR)\|\pa_{xx}\psi\|_{L^\infty}.
$$
This implies that the limit in the distribution sense of the right-hand 
side $R_\eps$ of \eqref{eqrhoeps1} vanishes.

On the one hand, multiplying equation \eqref{ellip1D} by $a(\pa_xS_\eps)$
and introducing the real-valued function $A$ such that $A'=a$, we get
\beq\label{fluxeps}
a(\pa_xS_\eps)\rho_\eps = -\pa_x(A(\pa_xS_\eps)) + a(\pa_xS_\eps)S_\eps,
\eeq
so that we can rewrite the conservation equation \eqref{eqrhoeps1} as follows, in ${\cal D}'(\RR)$ :
\beq\label{eqrhoeps}
\pa_t\rho_\eps +\pa_x\left(-\pa_x A(\pa_xS_\eps) + a(\pa_xS_\eps)S_\eps\right)
=\frac 45 \eps \pa_x (\pa_tJ_\eps + v^2\pa_x\rho_\eps).
\eeq

Taking the limit $\eps\to 0$ in the distribution sense of equation 
\eqref{eqrhoeps}, we get that in ${\cal D}'(\RR)$
\beq\label{eq:rholim}
\pa_t\rho +\pa_x\left(-\pa_x A(\pa_xS) + a(\pa_xS)S\right) = 0,
\eeq
where $S(t,x)=(K*\rho(t,\cdot))(x)$.
We recall that we have chosen the initial data such that 
$\rho_\eps^{ini}=\eta_\eps*\rho^{ini}$ where $\eta_\eps$ is a mollifier.
Therefore $\rho_\eps^{ini}\rightharpoonup \rho^{ini}$ in 
$\calM_b(\RR)-\sigma(\calM_b(\RR),C_0(\RR))$. 

On the other hand, as noticed above, $S$ satisfies \eqref{eq:Slim} 
and \eqref{eq:dSlim} in the distribution sense.
Proposition \ref{uniqS} above asserts that $S$ satisfying 
\eqref{eq:Slim}--\eqref{eq:dSlim} is unique. Thus $\rho$ is unique
since, if we assume that there exist $\rho_1$ and $\rho_2$ satisfying
\eqref{eq:rholim} in the distribution sense, then by the uniqueness of 
the solution of \eqref{eq:Slim}--\eqref{eq:dSlim}, we have that
$K*\rho_1=K*\rho_2$ which implies that $\rho_1=\rho_2$.
Finally, thanks to the uniqueness, all the sequence $\rho_\eps$ converges
to $\rho$ in $\smes$.

%%%%%%%%%%%%%%%%%%%%%%%%%%%%%%%%%%%%%%%%%%%%%%%%%%%%%%%%%%
\section{Numerical simulations}\label{sec.numeric}
%%%%%%%%%%%%%%%%%%%%%%%%%%%%%%%%%%%%%%%%%%%%%%%%%%%%%%%%%%%

We illustrate the previous convergence result with some numerical simulations
of the problem \eqref{cin1D}--\eqref{ellip1D}. 
We discretize the kinetic equation thanks to a semi-lagrangian scheme and
the elliptic equation for $S$ is discretized with $P_1$ finite elements.
We refer the reader to \cite{nv} for more details on the numerical scheme.
Notice that letting $\epsilon$ go to 0 in the simulations is very difficult because
of the high numerical diffusivity of the scheme. 
%A solution, far beyond the scope of this paper,
%should be to develop an asymptotic-preserving method.

We have chosen to present simulations with realistic numerical values.
For the bacteria {\it Escherichia Coli} the 
velocity is $c=20.\, 10^{-6}$ $m.s^{-1}$ and the density of cells is 
$n_0=10^{11}$ $m^{-1}$. The domain is assumed to be an interval of length 
$x_0=1\,cm$. The turning kernel is given by \eqref{T} with $\phi$
in \eqref{phi}. Due to the large value of $n_0$, the value of the 
parameter $\alpha$ should be very large to have an influence; thus
this parameter does not play a role in the dynamics of bacteria and for
the simulations we have fixed $\alpha = 1$.
We assume that the initial concentration of cells is 
a Gaussian centered in the middle of the domain. 
We run simulations with three different values for
$\phi_0$\,: $\phi_0 = 0.05$, $1$ and $20$ so that $\eps = v_0/(\phi_0 x_0)$
takes the values $10^{-4}$, $2.\,10^{-3}$ and $4.\,10^{-2}$.

In Figures \ref{timeevol} and \ref{compareps} we present evolution of the
density of cells with respect to the time and to $\eps$. 
We observe the aggregation of cells in the center of the domain which is 
the first step of the formation of a Dirac.
As $\eps\to 0$, the aggregation phenomenon is faster and the solution 
seems to converge to a Dirac.
We display the evolution of the gradient of the chemoattractant concentration
$\pa_xS$ in Figures \ref{comparS} and \ref{comparSeps}. 
A singularity in the center of the domain appears clearly.

\begin{figure}[ht!]
\begin{center}
\includegraphics[width=6cm]{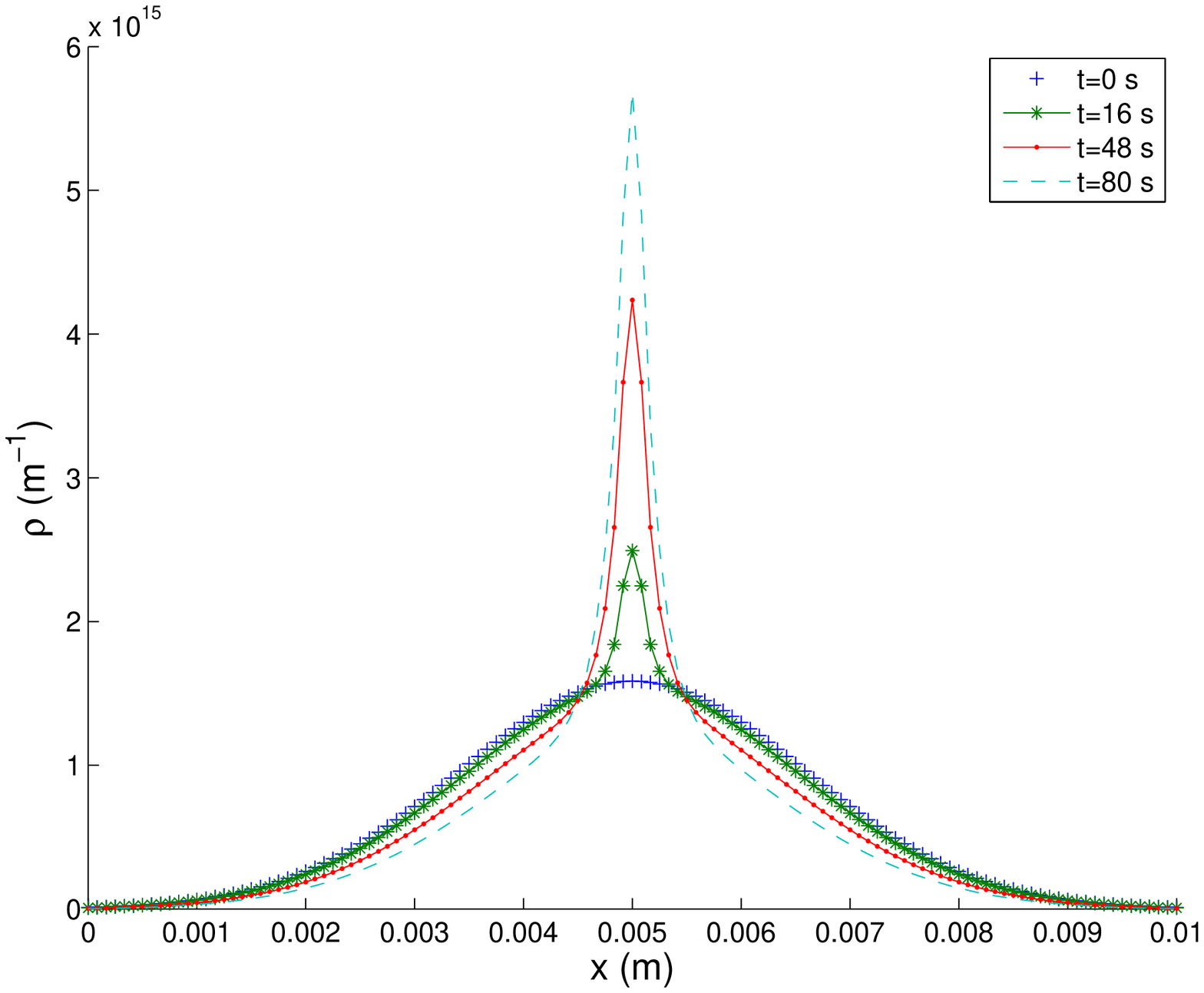}\hfill
\includegraphics[width=6cm]{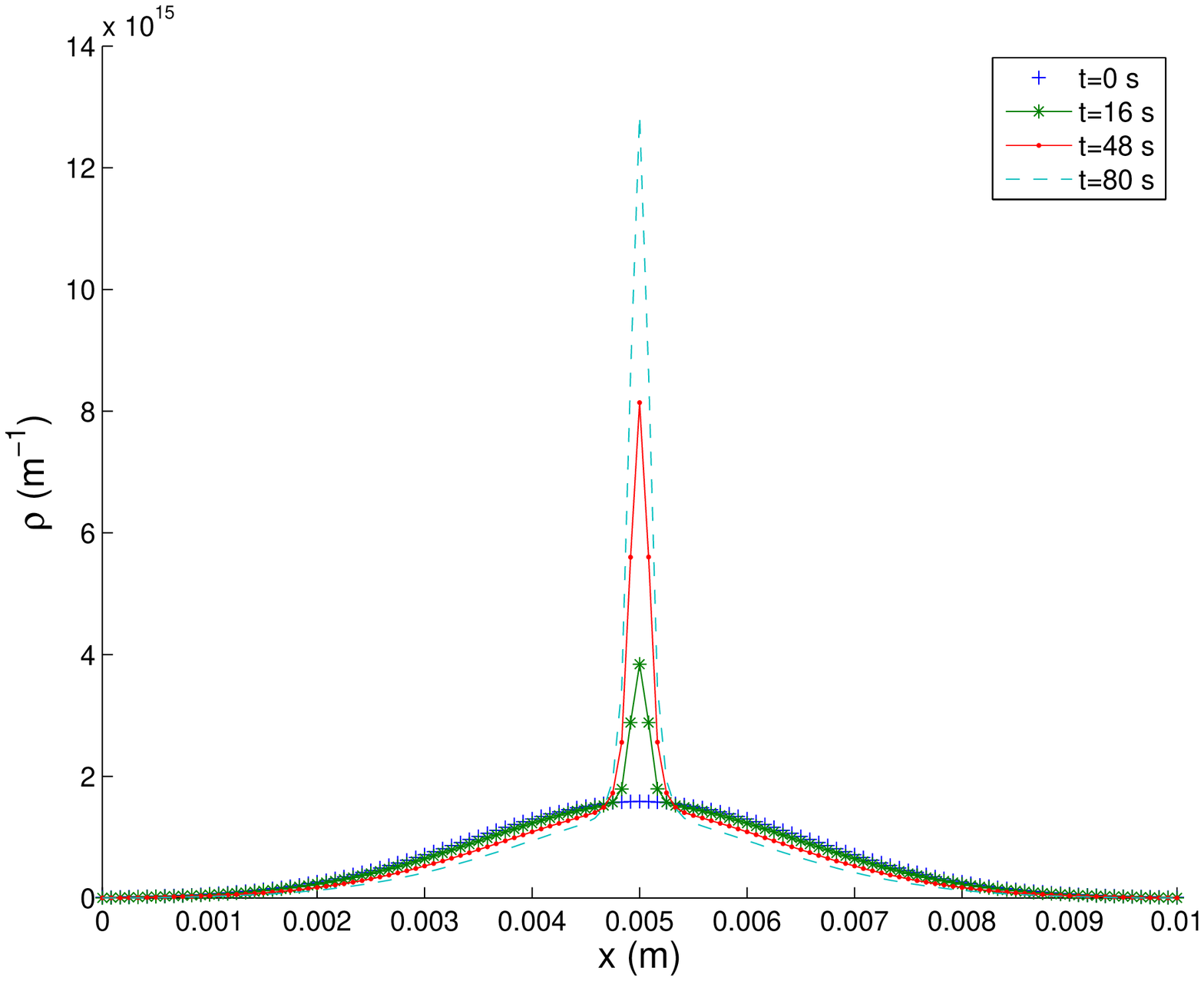}
\caption{Time evolution of the density $\rho$ of bacteria for different 
values of the parameters $\eps$. Left : $\eps=4.\,10^{-2}$. 
Right : $\eps =10^{-4} $.}
\label{timeevol}
\end{center}
\end{figure}

\begin{figure}[ht!]
\begin{center}
\includegraphics[width=10.2cm]{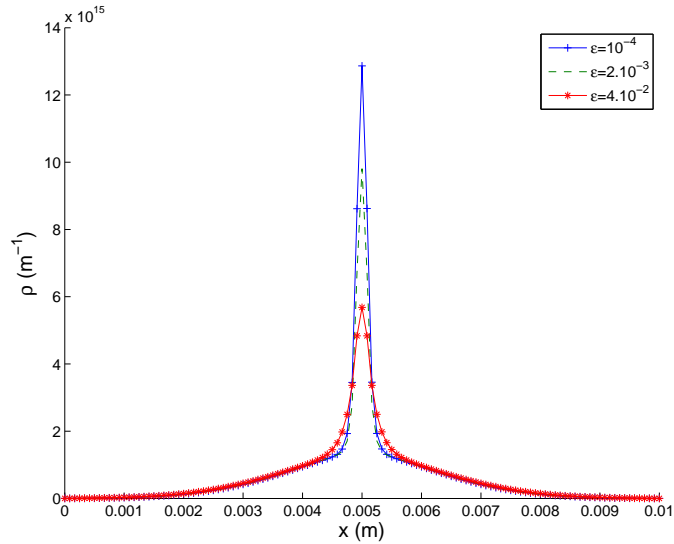}
\caption{Density $\rho$ of cells for different values of the parameters
$\eps$ at time $t=80$ s. As $\eps$ becomes smaller the concentration 
effect is more important.}
\label{compareps}
\end{center}
\end{figure}

\begin{figure}[ht!]
\begin{center}
\includegraphics[width=10.2cm]{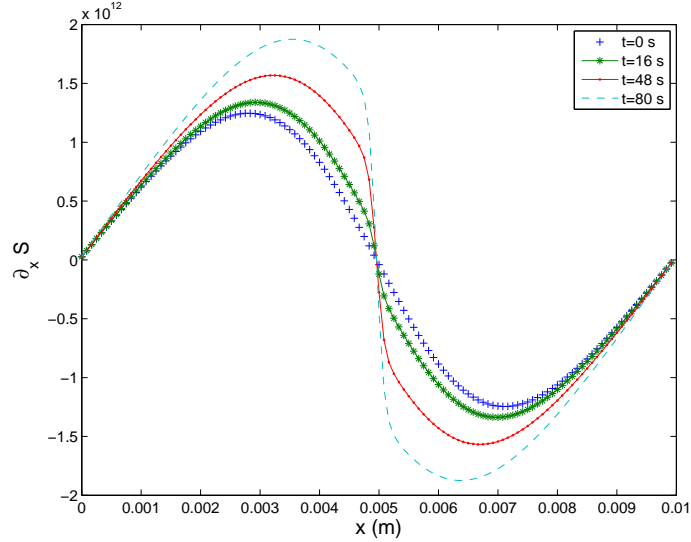}
\caption{Time dynamics of the gradient of potential $\pa_x S$ with 
$\eps=10^{-4}$. 
As time increases the derivative of the potential tends to become singular.}
\label{comparS}
\end{center}
\end{figure}

\begin{figure}[ht!]
\begin{center}
\includegraphics[width=10.2cm]{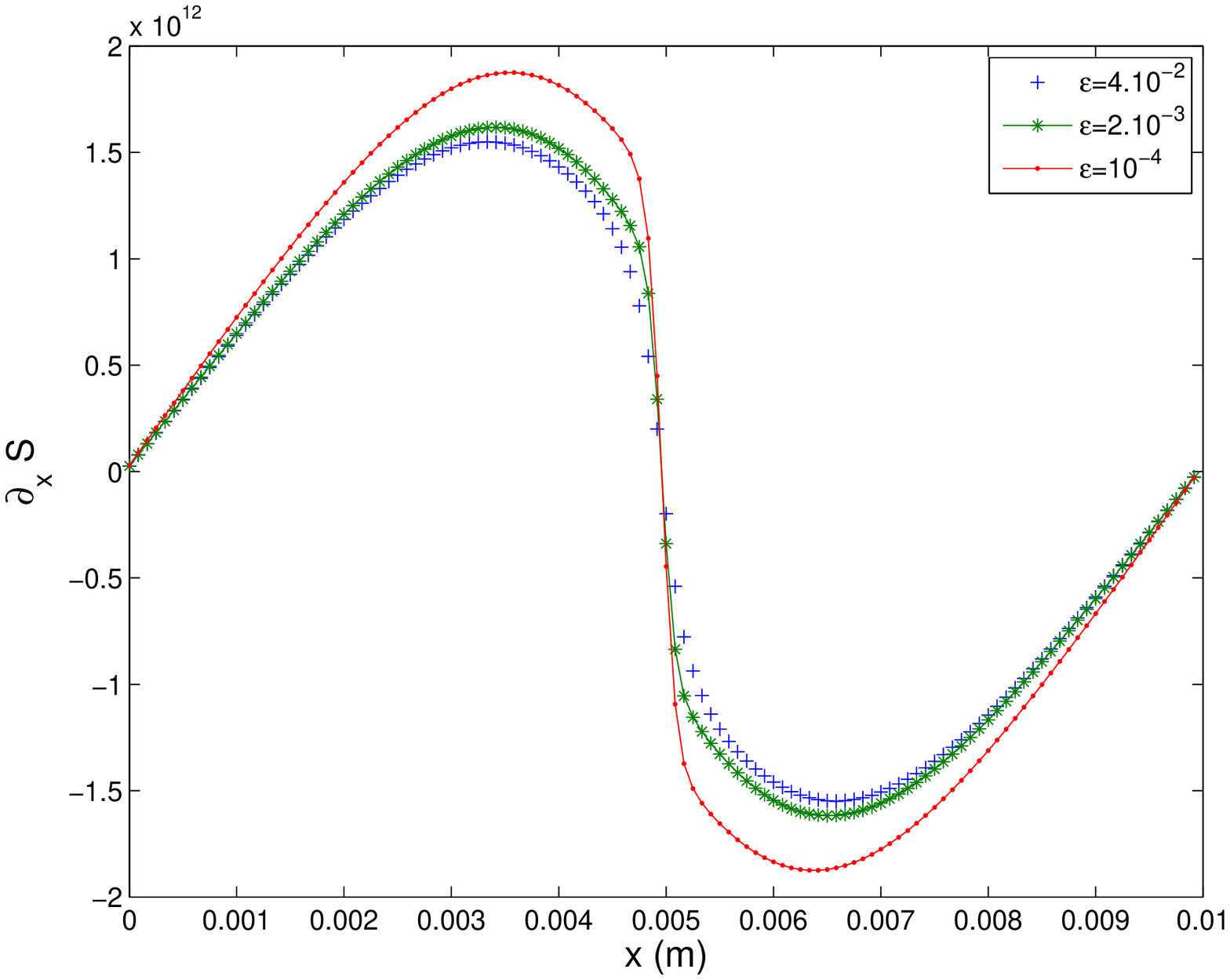}
\caption{Gradient of the potential $\pa_xS$ for different values for $\eps$
at time $t=80~$s.}
\label{comparSeps}
\end{center}
\end{figure}

%%%%%%%%%%%%%%%%%%%%%%%%%%%%
\section{Conclusion}
%%%%%%%%%%%%%%%%%%%%%%%%%%%%

In this work we have studied the convergence of a kinetic model 
of cells aggregation by chemotaxis towards a hydrodynamic model
which appears to be a conservation law coupled to an elliptic equation.
Although the limit of the macroscopic quantity $\rho_\eps$ and
$S_\eps$ have been obtained in Theorem \ref{the.conv},
this mathematical result is not completely satisfactory
since the limit model \eqref{eq.flux} does not 
allow to define a macroscopic velocity for the flux.
Formally, this macroscopic velocity is given by $a(\pa_xS)$ defined by 
\eqref{eqahydro}. However, since $\rho$ is only measure-valued, 
$\pa_xS$ belongs to $BV(\RR)$, hence we cannot give a sense to 
the product $a(\pa_xS) \rho$.

A possible convenient setting to overcome this difficulty is the notion 
of duality solutions, introduced by Bouchut and James \cite{bj1}.
In this framework, we can solve the Cauchy problem for conservation equations
in one dimension with a coefficient $a$ that satisfies a one-sided 
Lipschitz condition. The theory in higher dimensions is not complete 
\cite{bjm}, and Poupaud and Rascle \cite{pouras} (see also\cite{bianch}) for another approach,
which coincides with duality in the 1-d case.
It is actually not difficult to prove that $a$ defined in \eqref{eqahydro} 
is one-sided Lipschitz. In fact, from $\rho \geq 0$, we deduce
that $-\pa_{xx}S\leq S$. After straightforward computation, we get
$$
\pa_x (a(\pa_xS)) = -\frac 45 c (\phi'(-c\pa_xS)+\phi'(c\pa_xS))\pa_{xx}S.
$$
Therefore, $\phi$ being nonincreasing and smooth, we deduce
$$
\pa_x (a(\pa_xS)) \leq \max\{\frac 85 c \|\phi'\|_{L^\infty} S,0\}.
$$
And the properties of the convolution lead to
$$
\|S(t,\cdot)\|_{L^\infty} \leq \frac 12 |\rho(t,\cdot)|(\RR)=
\frac 12 |\rho^{ini}|(\RR).
$$
Finally, $a$ satisfies the OSL condition\,:
$$
\exists\, \beta \in L^1([0,T]), \quad \pa_x a(t,\cdot) \leq \beta(t)
\quad \mbox{in the distribution sense. }
$$

However, we are not able to prove the uniqueness of the duality solutions
for the hydrodynamic problem. In fact, the uniqueness proof in Section 
\ref{uniqueS} relies on the fact that the potential $S$ satisfies 
equation \eqref{eq:Slim} and thus on the definition the flux $J$
in \eqref{eq.flux}. In the framework of duality solution, the 
conservation equation is not a priori satisfied in the distribution 
sense. A generalized flux that has a priori no link with $J$ in
\eqref{eq.flux} is then introduced. The relation between these flux
and therefore the passage from $J$ to the macroscopic velocity $a$
is still an open question.

%--------------------------------------------acknowledgments:
\vspace{0.5cm} \indent {\it Acknowledgments.}\; The authors
acknowledge Beno\^{\i}t Perthame for driving their attention on this problem of hydrodynamic limit.

%-----------------------------------------------
% BIBLIOGRAPHY:
%-----------------------------------------------
\bigskip
\begin{center}

\end{center}

%-----------------------------------------------
% Authors (name, address, e-mail)
%-----------------------------------------------
\bigskip
\bigskip
\begin{minipage}[t]{10cm}
\begin{flushleft}
\small{
\textsc{Fran\c{c}ois James}
\\*Universit\'e d'Orl\'eans, Math\'ematiques, Applications et Physique Math\'ematique d'Orl\'eans,
\\*CNRS UMR 6628, MAPMO
\\*F\'ed\'eration Denis Poisson, CNRS FR 2964,
\\* 45067 Orléans cedex 2, France
\\*e-mail: Francois.James@univ-orleans.fr
\\[0.4cm]
\textsc{Nicolas Vauchelet}
\\*UPMC Univ Paris 06, UMR 7598, Laboratoire Jacques-Louis Lions,
\\*CNRS, UMR 7598, Laboratoire Jacques-Louis Lions and
\\*INRIA Paris-Rocquencourt, Equipe BANG
\\*F-75005, Paris, France,
\\*e-mail: vauchelet@ann.jussieu.fr
}
\end{flushleft}
\end{minipage}

%---------------------------------------------------- THE END

\begin{thebibliography}{99}


\bibitem{alt} {\sc W. Alt}, {\it Biased random walk models for chemotaxis and 
related diffusion approximations}, J. Math. Biol. {\bf 9}, 147--177 (1980).

\bibitem{bertozzi1} {\sc A.L. Bertozzi} and {\sc J. Brandman}, {\it 
    Finite-time Blow-up of $L^\infty$-weak solutions of an aggregation 
    equation}, Commun. Math. Sci. {\bf 8} 1 (2010), 45--65.

\bibitem{bertozzi2} {\sc A.L. Bertozzi}, {\sc J.A. Carrillo} and {\sc Th. 
    Laurent}, {\it Blow-up in multidimensional aggregation equation 
    with mildly singular interaction kernels}, Nonlinearity {\bf 22} (2009) 683--710.

\bibitem{bertozzi3} {\sc A.L. Bertozzi}, {\sc Th. Laurent} and {\sc J. Rosado},
  {\it $L^p$ theory for the multidimensional aggregation equation}, 
  to appear in Comm. Pur. Appl. Math., 2010.

\bibitem{bianch} {\sc S. Bianchini} and {\sc M. Gloyer},
{\it An estimate on the flow generated by monotone operators}, preprint


\bibitem{bodvel} {\sc M. Bodnar} and {\sc J.J.L. Velazquez}, 
  {\it An integro-differential equation arising as a limit of individual 
    cell-based models}, J. Diff. Eq. {\bf 222} (2006) 341--380.


\bibitem{bj1} {\sc F. Bouchut} and {\sc F. James}, {\it One-dimensional 
    transport equations with discontinuous coefficients},
  Nonlinear Analysis TMA {\bf 32}  (1998), n$^o$ 7, 891--933.

\bibitem{BJpg} {\sc F. Bouchut} and {\sc F. James},
  {\it Duality solutions for pressureless gases, monotone scalar
    conservation laws, and uniqueness},
  Comm. Partial Differential Eq., {\bf 24} (1999), 2173-2189.

\bibitem{bjm} {\sc F. Bouchut}, {\sc F. James} and {\sc S. Mancini},
  {\it Uniqueness and weak stability for multidimensional transport 
    equations with one-sided Lipschitz coefficients}
  Ann. Scuola Norm. Sup. Pisa Cl. Sci. (5), {\bf IV} 2005, 1-25.

\bibitem{bourncalv} {\sc N. Bournaveas}, {\sc V. Calvez}, {\sc S. Guti\`errez}
  and {\sc B. Perthame},  {\it Global existence for a kinetic model 
    of chemotaxis via dispersion and  Strichartz estimates}, 
  Comm. Partial Differential Eq., {\bf 33} (2008), 79--95.

\bibitem{chalubperth} {\sc F.A.C.C. Chalub}, {\sc P.A. Markowich}, {\sc B. Perthame} and {\sc C. Schmeiser},  
{\it Kinetic models for chemotaxis and their 
    drift-diffusion limits}, Monatsh. Math.  {\bf 142} (2004), 123--141.

\bibitem{dolschmeis} {\sc Y. Dolak} and {\sc C. Schmeiser}, 
  {\it Kinetic models for chemotaxis : Hydrodynamic limits and 
    spatio-temporal mechanisms}, J. Math. Biol. {\bf 51}, 595--615 (2005).

\bibitem{erbanhwang} {\sc R. Erban} and {\sc H.J. Hwang}, {\it Global 
existence results for complex hyperbolic models of bacterial chemotaxis}, 
Disc. Cont. Dyn. Systems-Series B {\bf 6} (2006), n$^o$ 6, 1239--1260.

\bibitem{erbanothmerbacterie} {\sc R. Erban} and {\sc H.G. Othmer}, 
  {\it From individual to collective behavior in bacterial chemotaxis}, 
  SIAM J. Appl. Math. {\bf 65} (2004/05), n$^o$ 2, 361--391.

\bibitem{filblaurpert} {\sc F. Filbet}, {\sc Ph. Lauren\c{c}ot} and
{\sc B. Perthame}, {\it Derivation of hyperbolic models for 
chemosensitive movement}, J. Math. Biol. {\bf 50} (2005), 189--207.

\bibitem{hillenothmer} {\sc T. Hillen} and {\sc H.G. Othmer}, {\it The 
  diffusion limit of transport equations derived from velocity jump processes},
  SIAM J. Appl. Math. {\bf 61}  (2000), n$^o$ 3, 751--775.

\bibitem{hwang} {\sc H.J. Hwang}, {\sc K. Kang} and {\sc A. Stevens}, 
  {\it Global solutions of nonlinear transport equations for 
    chemosensitive movement}, SIAM J. Math. Anal. {\bf 36} (2005), 
  n$^o$ 4, 1177--1199

\bibitem{KS} {\sc E.F. Keller} and {\sc L.A. Segel}, {\it Initiation of 
    slime mold aggregation viewed as instability}, 
  J. Theor. Biol. {\bf 26}, 399--415 (1970).

\bibitem{laurent} {\sc Th. Laurent}, {\it Local and global existence for an 
    aggregation equation}, Comm. Partial Diff. Eq. {\bf 32} (2007), 1941--1964.

\bibitem{othdunalt} {\sc H.G. Othmer}, {\sc S.R. Dunbar} and {\sc W. Alt},
  {\it Models of dispersal in biological systems}, J. Math. Biol. {\bf 26} 
  (1988), 263--298.

\bibitem{othhill} {\sc H.G. Othmer} and {\sc T. Hillen}, {\it The diffuion limit of transport 
    equations. II. Chemotaxis equations}, SIAM J. Appl. Math. {\bf 62} (2002),  1222--1250.

\bibitem{othste} {\sc H.G. Othmer} and {\sc A. Stevens}, {\it Aggregation, blowup, 
    and collapse : the ABCs of taxis in reinforced random walks}, 
  SIAM J. Appl. Math. {\bf 57} (1997), 1044--1081.

\bibitem{patlak} {\sc C.S. Patlak}, {\it Random walk with persistence and 
    external bias}, Bull. Math. Biophys. {\bf 15} (1953), 263--298.

\bibitem{perthame} {\sc B. Perthame}, {\it PDE models for chemotactic movements : 
parabolic, hyperbolic and kinetic}, Appl. Math. {\bf 49} (2004), n$^o$ 6, 539--564.

\bibitem{perthame2} {\sc B. Perthame}, {\it Transport Equations in Biology}, 
  Frontiers in Mathematics. Basel: Birk\"auser Verlag.

\bibitem{pouras} {\sc F. Poupaud} and {\sc M. Rascle},
  {\it Measure solutions to the linear multidimensional transport equation
    with discontinuous coefficients},
  Comm. Partial Diff. Equ. {\bf 22} (1997), 337--358.

\bibitem{nv} {\sc N. Vauchelet}, {\it Numerical simulation of a 
  kinetic model for chemotaxis}, Kinetic and Related Models {\bf 3} (2010),
n$^o$ 3, 501--528.



 
\end{thebibliography}
\end{document}